\newtheorem{theorem}{Theorem}
\newtheorem{lemma}[theorem]{Lemma}
\newtheorem{claim}[theorem]{Claim}
\newtheorem{corollary}[theorem]{Corollary}
\newtheorem{proposition}[theorem]{Proposition}
\title{On the dib-chromatic number of a digraph}
\author{Juan José Montellano-Ballesteros\footnotemark[2] \and Christian Rubio-Montiel\footnotemark[3]}
\begin{document}
\maketitle

\def\thefootnote{\fnsymbol{footnote}}
\footnotetext[2]{Instituto de Matemáticas, Universidad Nacional Aut{\'o}noma de M{\' e}xico, CDMX, Mexico. {\tt juancho@im.unam.mx}.}
\footnotetext[3]{Divisi{\' o}n de Matem{\' a}ticas e Ingenier{\' i}a, FES Acatl{\' a}n, Universidad Nacional Aut{\'o}noma de M{\' e}xico, Naucalpan, Mexico. {\tt christian.rubio@acatlan.unam.mx}.}

\begin{abstract} 
An acyclic coloring of a digraph that maximizes the number of colors such that each color class has a vertex pointing to all other classes and a vertex pointing to it from all other classes is known as the dib-chromatic number of a digraph.

In this paper, we answer the question about the existence of the dib-chromatic number and study the dib-chromatic number of bipartite digraphs.
\end{abstract}
\textbf{Keywords.} Complete coloring, acyclic coloring, $b$-chromatic number, irreducible coloring.

\textbf{Mathematics Subject Classification.} 05C15, 05C20.


\section{Introduction}

Chromatic graph theory requires ways to optimize the search for its parameters; one way is through algorithms. For instance, we would like to perform an operation to reduce the number of colors in a graph with a given proper coloring. One way to achieve this is by recoloring its vertices, not necessarily in a greedy way, but by distributing the vertices of a single color to the rest of the color classes. Of course, such recoloring is not feasible if such a color class contains a vertex with neighbors to all other color classes; such a vertex is called a \textit{$b$-vertex}. A \emph{$b$-coloring} of a given graph is a coloring of the vertices such that in each color class there exists a $b$-vertex.
The \textit{$b$-chromatic number} of the graph $G$ is defined as the worst case for the number of colors used by this coloring heuristic method, equivalently defined as the maximum $k$ such that a proper $b$-coloring using $k$ colors is accepted, and denoted by $b(G)$. Note that, if $G$ is a graph with a proper coloring using $\chi(G)$ colors, such a coloring is a $b$-coloring.

The study of the $b$-chromatic number emerged in the late 1990s \cite{MR1670155} where its complexity was determined. Since then, several papers have expanded its study, for example \cite{MR2606622,MR1927071,MR2063820}. It has been of interest to extend the chromatic number to the theory of digraphs \cite{javiernol2024dibchromaticnumberdigraphs,MR3511873}, as it has been for other coloring parameters \cite{MR3875016,MR4426060,MR2998438,MR693366,MR3202296}. 

For instance, \emph{the dichromatic number} $\text{dc}$ is a generalization of the chromatic number for a graph, where a proper coloring in graphs corresponds to an \emph{acyclic coloring} in a digraph $D$ and the goal is to find the minimum number of colors $\text{dc}(D)$ used in this type of coloring introduced by Neumann-Lara \cite{MR693366} and by Erd{\H o}s \cite{MR593699} when they showed that in a tournament $T_n$ then $\text{dc}(T_n)=O(\frac{n}{\lg n})$, also see \cite{MR841316}.

In an acyclic coloring of $D$ with $\text{dc}(D)$ colors, there are arcs in both directions between each pair of color classes, giving rise to the extension of \emph{complete coloring} of graphs to digraphs \cite{MR2895432,MR3329642}. In \cite{MR3875016}, \emph{the achromatic number} $\text{dac}(D)$ of a digraph $D$ is defined as the maximum over the acyclic and complete colorings in $D$, showing results such as in a tournament $T_n$ then $\text{dac}(T_n)=\Omega(\frac{n}{\lg n})$.

To generalize the $b$-chromatic number from an algorithmic point of view, one could define a ``$b$-vertex'' with respect to an acyclic coloring of a digraph $D$ as a $u$ vertex colored $i$ such that $u\cup C_j$ contains a directed cycle for each color class $C_j$, $i\not=j$. However, if we consider a directed cycle $\overrightarrow{C}_3$, with an acyclic coloring using two colors, the singular color class is a ``$b$-vertex'' but the other color class has no ``$b$-vertices''.

In \cite{javiernol2024dibchromaticnumberdigraphs}, a vertex $u$ is called a \emph{$b^+$-vertex} with respect to a coloring $\Gamma$ of a digraph $D$ if $u$ is incident to a vertex colored with each color different from the color of $u$. Analogously, a vertex $v$ is a \emph{$b^-$-vertex} with respect to $\Gamma$ if $v$ is incident from a vertex colored with each color different from the color of $v$.

A \emph{$b$-coloring} of $D$ is a coloring such that each color class contains a $b^+$-vertex and a $b^-$-vertex.
The \emph{$\text{dib}$-chromatic number} of $D$, denoted by $\text{dib}(D)$, is the maximum $k$ such that $D$ admits an acyclic $b$-coloring with $k$ colors. The question then arises as to the existence of the $\text{dib}(D)$ for a given digraph $D$.

This paper is divided as follows. Section 2 presents the existence of the dib-chromatic number. Section 3 addresses the study of the dib-chromatic number in bipartite digraphs, including simple bipartite digraphs.


\section{Existence}

In this section, we will show that for any given digraph $D$, the dib-chromatic number $\text{dib}(D)$ of $D$ exists, by showing that $\text{dc}(D) \leq \text{dib}(D)$, where $\text{dc}(D)$ is the  dichromatic number of $D$. We consider finite, and loopless digraphs, i.e., digraphs where digons are permitted.

Let $D=(V(D), E(D))$ be a digraph and $r\geq 2$ be an integer. 
An acyclic coloring $\Gamma$ of $V(D)$ using $r$ colors will be called {\it $b$-irreducible} if there is no other acyclic coloring $\Gamma'$ of $V(D)$ using $r-1$ colors such that, for each $i\in [r-1]:= \{1,\dots,r-1\}$, $\Gamma^{-1}(i)\subseteq \Gamma'^{-1}(i)$.


Given an acyclic coloring  of $V(D)$ using $r$ colors and $i\in [r]$, we will say that a vertex $x\in\Gamma^{-1}(i)$ is a \emph{$b$-vertex} if it is a $b^+$-vertex and a $b^-$-vertex, and a pair of vertices $\{x,y\} \subseteq \Gamma^{-1}(i)$ will be called a \emph{$b$-pair} if $x$ is a $b^+$-vertex and $y$ is a $b^-$-vertex.

Recall that the \emph{out-neighborhood} of a vertex $x$ of $D$ is $N^+_D(x)=\{y\in V(D)\colon xy \in E(D)\}$, and its \emph{in-neighborhood} is $N^-_D(x)=\{y\in V(D)\colon yx \in E(D)\}$.

\begin{lemma}\label{existe} Let $D$ be a digraph and $\Gamma: V(D)\rightarrow [r]$ be an acyclic coloring of $D$ using $r$ colors. If $\Gamma$ is $b$-irreducible, then $\Gamma$ is a $b$-coloring of $D$.
\end{lemma}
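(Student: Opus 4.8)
The plan is to prove the contrapositive: if $\Gamma$ is an acyclic coloring using $r$ colors that is \emph{not} a $b$-coloring, then $\Gamma$ is not $b$-irreducible, i.e., there exists an acyclic coloring $\Gamma'$ using $r-1$ colors with $\Gamma^{-1}(i) \subseteq \Gamma'^{-1}(i)$ for each $i \in [r-1]$. Since $\Gamma$ fails to be a $b$-coloring, some color class $\Gamma^{-1}(j)$ contains no $b$-vertex, but this is weaker than what we directly need; the key structural fact to extract is that some class fails to contain both a $b^+$-vertex and a $b^-$-vertex \emph{simultaneously in the sense of a $b$-pair}. So first I would reduce to identifying a class $C_j$ that lacks a $b$-pair, meaning either $C_j$ has no $b^+$-vertex or $C_j$ has no $b^-$-vertex.

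Without loss of generality relabel so the deficient class is $C_r = \Gamma^{-1}(r)$, and suppose it has no $b^+$-vertex (the no-$b^-$-vertex case is symmetric by reversing all arcs). The plan is then to redistribute the vertices of $C_r$ among the other $r-1$ classes so as to build $\Gamma'$. Because no vertex of $C_r$ is a $b^+$-vertex, each $x \in C_r$ misses some color class in its out-neighborhood: there is a color $c(x) \in [r-1]$ such that $x$ has no out-neighbor colored $c(x)$. The idea is to recolor each such $x$ with the color $c(x)$. I would define $\Gamma'(v) = \Gamma(v)$ for $v \notin C_r$ and $\Gamma'(x) = c(x)$ for $x \in C_r$; this automatically satisfies the containment $\Gamma^{-1}(i) \subseteq \Gamma'^{-1}(i)$ for each $i \in [r-1]$ and uses only $r-1$ colors.

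The main obstacle, and the heart of the proof, is verifying that $\Gamma'$ is still an acyclic coloring — that moving each $x$ into class $c(x)$ does not create a monochromatic directed cycle. The plan is to argue by contradiction: suppose $\Gamma'$ contains a monochromatic directed cycle $Z$ in some color class $c$. Since $\Gamma$ was acyclic, $Z$ must use at least one recolored vertex $x \in C_r$ with $c(x) = c$. But by construction $x$ has no out-neighbor originally colored $c$ under $\Gamma$, and the only vertices now colored $c$ that were not colored $c$ before are themselves recolored vertices of $C_r$. Here I would need the finer point that a newly colored vertex $x$ has no out-neighbor that is colored $c$ in $\Gamma'$ either — the successor of $x$ along $Z$ is colored $c$ but cannot be an old $c$-vertex (by choice of $c(x)$) and cannot be another recolored $C_r$-vertex without more care, since two vertices of $C_r$ could both be assigned color $c$.

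To close this gap I would strengthen the selection of the recoloring: process the vertices of $C_r$ and choose $c(x)$ more carefully, or observe that since $C_r$ induces no arc participating in a monochromatic issue, any directed path inside the recolored structure must eventually leave the recolored set and hit an old $c$-colored out-neighbor, contradicting the defining property of $c(x)$ for the last recolored vertex on the cycle. Concretely, let $x$ be a recolored vertex on $Z$ whose successor $y$ on $Z$ is \emph{not} a recolored vertex (such an $x$ exists unless all of $Z$ lies in $C_r$, impossible since $C_r$ was acyclic and the recoloring does not add arcs); then $y$ is an old vertex colored $c = c(x)$ lying in $N^+(x)$, contradicting the choice of $c(x)$. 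This establishes that $\Gamma'$ is acyclic, hence a valid coloring witnessing that $\Gamma$ is not $b$-irreducible, completing the contrapositive.
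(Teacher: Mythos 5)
Your proposal is correct and takes essentially the same route as the paper: both arguments recolor each vertex of the deficient class (WLOG one with no $b^+$-vertex) by a color absent from its out-neighborhood, and both verify acyclicity by observing that a monochromatic cycle cannot lie entirely inside the recolored set (old acyclicity) and otherwise must contain an arc from a recolored vertex to an old vertex of that color, contradicting the choice of the new color. The paper merely packages the recoloring through the sets $B_1,\dots,B_{r-1}$ and derives the ``no $b^+$-vertex or no $b^-$-vertex'' dichotomy from its $b$-vertex/$b$-pair phrasing, but the construction and the key acyclicity check are the same as yours.
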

\begin{proof}  Let $D$ be a digraph, $\Gamma: V(D)\rightarrow [r]$ be an acyclic $b$-irreducible coloring of $D$ using $r$ colors and let us suppose there is no $b$-vertex nor $b$-pair in $\Gamma^{-1}(r)$.

For each $x\in \Gamma^{-1}(r)$ let  $V^-_b(x) = \{j\in[r-1] : N_D^-(x)\cap \Gamma^{-1}(j) = \emptyset\}$ and 
 $V^+_b(x) = \{j\in[r-1] : N_D^+(x)\cap  \Gamma^{-1}(j) = \emptyset\}$.

Observe that since there is no  $b$-vertices in $\Gamma^{-1}(r)$, it follows that for every $x\in \Gamma^{-1}(r)$ we have that $V^+_b(x) \not= \emptyset$ or $V^-_b(x) \not= \emptyset$.
 
\begin{claim} Either for every $x\in \Gamma^{-1}(r)$ we have $V^+_b(x) \not= \emptyset$, or for every $x\in \Gamma^{-1}(r)$ we have $V^-_b(x) \not= \emptyset$. \end{claim}
 
 \noindent Suppose there is $x\in \Gamma^{-1}(r)$ where $V^+_b(x) = \emptyset$. Hence, for every $j\in[r-1]$ it follows that  $N_D^+(x)\cap \Gamma^{-1}(j) \not= \emptyset$. That is, $x$ is a $b^+$-vertex. Since there is no $b$-pairs in $\Gamma^{-1}(r)$ it follows that for each $y\in \Gamma^{-1}(r)\setminus\{x\}$  there is  $i_y\in[r-1]$ such that   $N_D^-(y)\cap \Gamma^{-1}(i_y)= \emptyset$. That is, for every $y\in \Gamma^{-1}(r)\setminus\{x\}$ we have 
  $V^-_b(y) \not= \emptyset$. Finally, since $x$ is not a $b$-vertex, there is $i\in[r-1]$ such that   $N_D^-(x)\cap \Gamma^{-1}(i) = \emptyset$. Thus  $V^-_b(x) \not= \emptyset$ and the claim follows.\\

 By the previous claim, without loss of generality, we suppose that for every $x\in \Gamma^{-1}(r)$ we have $V^+_b(x) \not= \emptyset$.
 Let  $B_1 = \{x\in \Gamma^{-1}(r) : 1\in V^+_b(x)\}$ and, for $1< j\leq r-1$, let  
 $B_j = \{x\in \Gamma^{-1}(r) : j\in V^+_b(x)\}\setminus\bigcup\limits_{1\leq i<j} B_i$. 

 Observe that since for every $x\in \Gamma^{-1}(r)$ we have $V^+_b(x) \not= \emptyset$, it follows that $\{B_1, \dots, B_{r-1}\}$ is a set of pairwise disjoint sets whose union is  $\Gamma^{-1}(r)$.

 Let $\Gamma':V(D)\rightarrow [r-1]$ be a coloring of $D$ using $r-1$ colors defined as follows: Given $x\in V(D)$, $\Gamma'(x) = j$ whenever $\Gamma(x) = j$ or $x\in B_j$. 
 
\begin{claim} $\Gamma'$ is an acyclic coloring such that, for each $i\in [r-1]$, $\Gamma^{-1}[i]\subseteq \Gamma'^{-1}[i]$. \end{claim}
 
 \noindent By definition of $\Gamma'$, it follows that, for each $i\in [r-1]$, we have $\Gamma^{-1}[i]\subseteq \Gamma'^{-1}[i]$. Let $j\in[r-1]$. By definition, $\Gamma'^{-1}(j)= \Gamma^{-1}(j)\cup B_j$ with $B_j\subseteq \Gamma^{-1}(r)$. Since $\Gamma$ is acyclic, there is no cycle contained neither in $D[\Gamma^{-1}(j)]$ nor $D[B_j]$, and since for every $x\in B_j$, $N_D^+(x)\cap \Gamma^{-1}(j) = \emptyset$, it follows that there is no cycle in $D[\Gamma'^{-1}(j)]$. \\

From the last claim, it follows that $\Gamma$ is not a $b$-irreducible coloring which is a contradiction. Thus, for every $i\in [r]$ there is either a $b$-vertex or a $b$-pair and the result follows. \end{proof}

\begin{theorem}
Given a digraph $D$, $\text{dc}(D) \leq \text{dib}(D)$. 
\end{theorem}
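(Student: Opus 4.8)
The plan is to exhibit an acyclic $b$-coloring of $D$ that uses exactly $\text{dc}(D)$ colors; producing such a coloring immediately gives $\text{dib}(D) \geq \text{dc}(D)$, which is the desired inequality and, as a byproduct, settles the existence question. The natural candidate is any acyclic coloring $\Gamma$ that attains the dichromatic number, that is, one using $r := \text{dc}(D)$ colors; such a $\Gamma$ exists by the very definition of $\text{dc}(D)$. The whole argument is then a short application of Lemma~\ref{existe}.

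First I would dispose of the degenerate case $r = 1$. Here $D$ is acyclic, and coloring every vertex with the single color $1$ yields a coloring in which the unique color class vacuously contains a $b^+$-vertex and a $b^-$-vertex, since there are no other classes to point to or from. Hence $\text{dib}(D) \geq 1 = \text{dc}(D)$ in this case, and one may assume $r \geq 2$ from now on (which is also the regime in which $b$-irreducibility is defined).

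For $r \geq 2$, the key observation is that minimality forces $b$-irreducibility. Indeed, if $\Gamma$ were not $b$-irreducible, then by definition there would exist an acyclic coloring $\Gamma'$ of $V(D)$ using only $r-1$ colors, contradicting that $r = \text{dc}(D)$ is the minimum number of colors in any acyclic coloring of $D$. In fact no acyclic coloring of $D$ uses fewer than $r$ colors, so the nonexistence condition defining $b$-irreducibility holds vacuously; therefore $\Gamma$ is $b$-irreducible.

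Finally I would invoke Lemma~\ref{existe}: a $b$-irreducible acyclic coloring is a $b$-coloring. Applying it to $\Gamma$ shows that $\Gamma$ is an acyclic $b$-coloring of $D$ using $r$ colors, whence $\text{dib}(D) \geq r = \text{dc}(D)$. I do not anticipate a genuine obstacle, since the theorem is essentially a corollary of the lemma; the only points needing care are the bookkeeping of the two boundary conventions — the $r \geq 2$ requirement in the definition of $b$-irreducibility and the vacuous verification of the $b$-coloring property when $r = 1$ — and the observation that minimality of the dichromatic coloring supplies $b$-irreducibility for free.
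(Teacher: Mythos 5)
Your proposal is correct and follows exactly the paper's own argument: take an acyclic coloring with $\text{dc}(D)$ colors, note that it is $b$-irreducible simply because no acyclic coloring with fewer colors exists, and apply Lemma~\ref{existe}. Your separate treatment of the case $\text{dc}(D)=1$ is a small point of extra care that the paper's one-line proof glosses over (since $b$-irreducibility is only defined for $r\geq 2$), but it does not change the substance of the argument.
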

 \begin{proof}
 Let $D$ be a digraph and let $\Gamma$ be an acyclic coloring of $D$ with $\text{dc}(D)$ colors. Clearly $\Gamma$ is a $b$-irreducible acyclic coloring which, by Lemma \ref{existe}, implies that $\Gamma$ is a $b$-coloring and therefore $\text{dc}(D) \leq \text{dib}(D)$. \end{proof}

\def\di #1{\overrightarrow{#1}}


\section{Bipartite digraphs}

An \textit{independent} set of vertices in a digraph $D$ is a set of vertices without any arcs between them. A \emph{bipartite} digraph is a digraph $D=(V(D),E(D))$ where $V$ is partitioned into the independent sets $A$ and $B$, for short denoted by $D=(A,B)$. In this section, we state some results on bipartite digraphs.
 
We recall that the \textit{independence number} $\beta(D)$ of $D$ is the maximum possible number of vertices of an independent set. 

A digraph $D$ is \emph{weak} (or \emph{weakly connected}) if for every pair $u,v \in V(D)$ there exists a $uv$-path (not necessarily a directed path), otherwise, $D$ is called \emph{disconnected}. The number of arcs in a shortest $uv$-path  is denoted by $d(u,v)$, and if $u$ and $v$ belong to different components of a disconnected digraph, then $d(u,v)= \infty$.
 

Let $\delta^+_D$ denote \emph{the minimum out-degree}, $\delta^-_D$ denote \emph{the minimum in-degree}, and $\delta_D=\min\{\delta^+_D,\delta^-_D\}$ denote the minimum degree of $D$, respectively.


\begin{proposition}\label{prop5}
Let $D=(A , B)$ be a bipartite digraph such that $\delta_D\geq 2$, then $\text{dib}(D)\geq 2.$ 
\end{proposition}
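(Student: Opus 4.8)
The plan is to exhibit an explicit acyclic $b$-coloring of $D$ using exactly two colors; by the definition of $\text{dib}$ this immediately gives $\text{dib}(D)\geq 2$. The natural candidate is the coloring $\Gamma\colon V(D)\to [2]$ induced by the bipartition, namely $\Gamma(a)=1$ for every $a\in A$ and $\Gamma(b)=2$ for every $b\in B$. Before anything else I would record that both parts are nonempty: since $\delta_D\geq 2$ there is a vertex of positive out-degree, and as $A$ and $B$ are independent every arc joins the two parts, so neither class is empty and the coloring genuinely uses two colors.

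First I would check that $\Gamma$ is acyclic. Each color class is one of the independent sets $A$ or $B$, so the induced subdigraph $D[\Gamma^{-1}(i)]$ has no arcs at all for $i\in\{1,2\}$, and in particular contains no directed cycle. Hence $\Gamma$ is an acyclic coloring.

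Next I would verify the $b$-coloring condition, i.e., that each class contains both a $b^+$-vertex and a $b^-$-vertex, and this is where the minimum-degree hypothesis enters. Fix any $a\in A$. Because $A$ is independent, every out-neighbor of $a$ lies in $B$, and $\delta^+_D\geq 2$ guarantees $N^+_D(a)\neq\emptyset$; thus $a$ is incident to a vertex of color $2$, the only color different from its own, so $a$ is a $b^+$-vertex. Symmetrically, $\delta^-_D\geq 2$ forces $N^-_D(a)\cap B\neq\emptyset$, so $a$ is also a $b^-$-vertex. The identical argument with the roles of $A$ and $B$ exchanged shows that every vertex of $B$ is simultaneously a $b^+$-vertex and a $b^-$-vertex. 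In particular each class consists entirely of $b$-vertices, so $\Gamma$ is a $b$-coloring.

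I do not expect a genuine obstacle here; the content is essentially bookkeeping around the definitions. The only points requiring a little care are (i) confirming that both parts are nonempty, so that the coloring truly uses two colors, and (ii) matching the directional definitions correctly, since a $b^+$-vertex needs an out-neighbor in the other class while a $b^-$-vertex needs an in-neighbor, which is exactly what $\delta^+_D\geq 1$ and $\delta^-_D\geq 1$ (both implied by $\delta_D\geq 2$) supply. Assembling these observations produces an acyclic $b$-coloring with two colors and hence $\text{dib}(D)\geq 2$.
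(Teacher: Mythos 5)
Your proof is correct, but it follows a genuinely different route from the paper's. The paper never constructs a coloring: it takes a directed path of maximum length, observes that its terminal vertex must have an out-neighbor on the path (using $\delta_D\geq 2$, so out-degrees are positive), deduces that $D$ contains a directed cycle, hence $\text{dc}(D)\geq 2$, and then the conclusion follows from the inequality $\text{dc}(D)\leq \text{dib}(D)$ established in Section 2. You instead exhibit the bipartition coloring explicitly and verify by hand that it is an acyclic $b$-coloring: each class is independent (hence induces no directed cycle), and the degree hypothesis gives every vertex an out-neighbor and an in-neighbor, necessarily lying in the opposite class, so every vertex is simultaneously a $b^+$-vertex and a $b^-$-vertex. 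Your route is more elementary and self-contained, avoiding the $b$-irreducibility machinery of Section 2 entirely, and it proves slightly more: the bipartition coloring itself witnesses $\text{dib}(D)\geq 2$ with every vertex a $b$-vertex, and your argument really only needs $\delta_D\geq 1$. What the paper's route buys is generality in a different direction: its argument never uses bipartiteness, so the same reasoning gives $\text{dib}(D)\geq 2$ for \emph{any} digraph with minimum out-degree at least one, at the cost of leaning on the existence theorem. Both arguments are valid; yours exploits the bipartite structure, the paper's exploits the already-proved bound $\text{dc}(D)\leq\text{dib}(D)$.
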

\begin{proof}
Consider a directed $uv$-path $P$ of maximum length, then vertex $v$ has an out-neighbor in $P$ thus $D$ contains a directed cycle and the result follows.
\end{proof}

The following result is given in \cite{javiernol2024dibchromaticnumberdigraphs}.

\begin{corollary}\label{cor5}\cite{javiernol2024dibchromaticnumberdigraphs}
For any digraph $D$ of order $n$, then $dib(D)\leq n-\beta(D)+1.$
\end{corollary}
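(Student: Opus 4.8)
The plan is to prove the upper bound $\text{dib}(D) \leq n - \beta(D) + 1$ by analyzing the structure forced by a $b$-coloring on an independent set of maximum size. First I would fix an acyclic $b$-coloring $\Gamma$ of $D$ using $k = \text{dib}(D)$ colors and let $S$ be an independent set with $|S| = \beta(D)$. The key observation is that since $S$ is independent, there are no arcs among vertices of $S$, so for each color class, the $b^+$-vertex and $b^-$-vertex that witness completeness impose a counting constraint. I would count how many color classes can possibly be ``concentrated'' on $S$ versus forced to use vertices outside $S$.

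The central idea is that a single color class cannot contain too many vertices of $S$ without forcing other classes to be ``spread out.'' More precisely, I would argue that at most one color class can be contained entirely within $S$, because if two distinct color classes $C_i, C_j$ were both subsets of $S$, then the $b^+$-vertex of $C_i$ would need an out-neighbor in $C_j \subseteq S$, contradicting that $S$ is independent (no arcs between vertices of $S$). Hence every color class except possibly one must contain at least one vertex outside $S$. Since the $k-1$ such classes each contribute at least one vertex to $V(D) \setminus S$, and these classes are pairwise disjoint, we get $k - 1 \leq |V(D) \setminus S| = n - \beta(D)$, which rearranges to $k \leq n - \beta(D) + 1$.

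The step I expect to be the main obstacle is making the ``at most one color class lies inside $S$'' argument fully rigorous in the presence of digons and the directed nature of the $b$-vertex conditions. In particular, I must verify that the existence of both a $b^+$-vertex and a $b^-$-vertex in a class $C_i \subseteq S$ genuinely forces an arc into the independent set $S$: the $b^+$-vertex $x \in C_i$ needs an out-neighbor in each other class, and if a second class $C_j$ also lies in $S$, that out-neighbor lands in $S$, violating independence. I would also need to handle carefully the edge case where one class is permitted to lie in $S$, confirming that exactly this slack accounts for the ``$+1$'' in the bound. Since this corollary is cited from \cite{javiernol2024dibchromaticnumberdigraphs}, the argument there is presumably along these lines, so I would structure my proof to match that counting framework.
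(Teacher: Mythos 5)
Your proof is correct. Note that this paper does not actually prove the statement---it is quoted from \cite{javiernol2024dibchromaticnumberdigraphs}---but your argument is the standard one and is sound: if two distinct color classes both lay inside a maximum independent set $S$, the $b^+$-vertex of one would need an out-neighbor in the other, producing an arc inside $S$; hence at least $k-1$ pairwise disjoint classes meet $V(D)\setminus S$, giving $k-1\leq n-\beta(D)$ (and the case $k=1$ is trivial).
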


\begin{lemma}\label{lemma7}
If $D=(A , B)$ is a connected bipartite digraph with $|A| = n\leq m= |B|$, then \[\text{dib}(D)\leq n+1 \]
\end{lemma}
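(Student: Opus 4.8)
The plan is to derive the bound directly from Corollary \ref{cor5}, which bounds $\text{dib}$ of a digraph on $N$ vertices by $N-\beta(D)+1$. Here $D$ has order $N=|A|+|B|=n+m$, so the whole task reduces to finding a good lower bound on the independence number $\beta(D)$.

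The key observation is that, by the very definition of a bipartite digraph, the part $B$ is an independent set, and it has size $m$. Hence $\beta(D)\geq m$. Substituting into Corollary \ref{cor5} gives
\[ \text{dib}(D)\leq (n+m)-\beta(D)+1\leq (n+m)-m+1=n+1, \]
which is exactly the claimed inequality. Notice that neither connectivity nor the degree hypotheses are needed for this direction; they presumably enter the companion lower-bound statements rather than this upper bound.

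If one prefers a self-contained argument that avoids invoking Corollary \ref{cor5}, I would instead count color classes directly. Take a $b$-coloring with $k$ colors and call a color class \emph{$B$-only} if it is contained in $B$. A $b^+$-vertex of such a class lies in $B$, so all of its out-neighbors lie in $A$; it can therefore point to no other class contained in $B$. Consequently at most one class can be $B$-only. Every remaining class meets $A$, and since the classes are pairwise disjoint there are at most $|A|=n$ of them, giving $k\leq n+1$.

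There is no genuine obstacle here, only bookkeeping to get right. In the first approach one must remember that the $n$ appearing in Corollary \ref{cor5} is the order of the \emph{whole} digraph, not $|A|$, so that the substitution $N=n+m$ is made correctly. In the second approach the only point needing care is the one-line argument that two $B$-only classes cannot coexist, which rests solely on bipartiteness and the definition of a $b^+$-vertex.
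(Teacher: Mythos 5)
Your first argument is exactly the paper's proof: apply Corollary \ref{cor5} to the whole digraph of order $n+m$, using that $B$ is an independent set so $\beta(D)\geq m$, which gives $\text{dib}(D)\leq n+m-m+1=n+1$; indeed you are slightly more careful than the paper, which writes $\beta(D)=\max\{n,m\}$ when only the inequality $\beta(D)\geq m$ is needed (and, as you note, connectivity plays no role). Your alternative self-contained counting argument (at most one color class can lie entirely inside $B$, since a $b^+$-vertex of such a class has all out-neighbors in $A$, and every other class meets $A$) is also correct and is essentially the proof of Corollary \ref{cor5} specialized to the independent set $B$, so nothing further is needed.
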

\begin{proof}
By Corollary \ref{cor5}, we have $\text{dib}(D)\leq n+m-\beta(D)+1=n+m-max\{n,m\}+1=n+1.$
\end{proof}

\begin{lemma}\label{lema8}
Let $D=(A, B)$ be a bipartite digraph. If $D$ has a $b$-coloring with $k\geq 3$ colors, then a partition contains vertices of all $k$ colors and the other partition contains vertices of at least $k-1$ colors.
\end{lemma}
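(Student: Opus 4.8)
The plan is to track, for each part of the bipartition, which colors appear in it, and to exploit the fact that in a bipartite digraph every out-neighbor of a vertex lies in the opposite part. Let $\Gamma$ be the given $b$-coloring with color classes indexed by $[k]$, and for each color $i$ let $x_i$ denote a $b^+$-vertex of class $i$ (which exists since $\Gamma$ is a $b$-coloring). Write $S_A\subseteq[k]$ (resp. $S_B$) for the set of colors that appear in $A$ (resp. $B$), and put $M_A=[k]\setminus S_A$, $M_B=[k]\setminus S_B$ for the colors missing from each part. Since each color class is nonempty, no color is missing from both parts, so $M_A\cap M_B=\emptyset$. The single tool I would use repeatedly is this: if a $b^+$-vertex of color $i$ lies in part $X$, then all of its out-neighbors lie in the opposite part and already realize every color in $[k]\setminus\{i\}$, so the opposite part contains vertices of all colors except possibly $i$.

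First I would show that each part misses at most one color, i.e. $|M_A|\le 1$ and $|M_B|\le 1$. Indeed, if two distinct colors $c,c'$ were missing from $A$, then color $c$ would appear only in $B$, forcing $x_c$ into $B$; by the observation above its out-neighbors in $A$ realize every color other than $c$, in particular $c'$, contradicting $c'\in M_A$. The symmetric argument handles $B$.

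The crux of the argument, and the step I expect to be the main obstacle, is ruling out the possibility that both parts miss a color simultaneously. Suppose $M_A=\{c\}$ and $M_B=\{d\}$; since $M_A\cap M_B=\emptyset$ we have $c\neq d$, and this is exactly where the hypothesis $k\ge 3$ enters: I can choose a third color $e\notin\{c,d\}$, which by construction appears in both parts. Now I locate $x_e$. If $x_e\in A$, its out-neighbors lie in $B$ and must realize color $d$, contradicting $d\in M_B$; if $x_e\in B$, its out-neighbors lie in $A$ and must realize color $c$, contradicting $c\in M_A$. Either way we reach a contradiction, so at least one of $M_A,M_B$ is empty.

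Combining the two claims finishes the proof: the part whose missing-set is empty contains all $k$ colors, while the other part misses at most one color and hence contains at least $k-1$ of them. I would remark that only the $b^+$-vertices are actually used (the $b^-$-vertices give a symmetric alternative), and that the assumption $k\ge 3$ is indispensable precisely because it guarantees the third color $e$ that drives the final contradiction.
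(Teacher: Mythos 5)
Your proof is correct and rests on exactly the same key observation as the paper's: in a bipartite digraph, a $b^+$-vertex of color $i$ lying in one part forces the opposite part to contain a vertex of every color except possibly $i$. The paper packages this more directly---by pigeonhole, two of the $k\geq 3$ $b^+$-vertices (one per color class) lie in the same part, so the opposite part sees all $k$ colors, and a short case analysis on where the remaining $b^+$-vertices sit gives the $k-1$ bound---whereas you reach the same conclusions by contradiction on the missing-color sets $M_A$ and $M_B$; both arguments are valid and of essentially the same depth.
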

\begin{proof}
Since $k \geq 3$, there are at least two $b^+$-vertices at some partition, say $B$. Hence $A$ contains all the $k$ colors in its vertices. If $A$ contains a $b^+$-vertex then it is incident to $k-1$ different colors in $B$, otherwise, all $b^+$-vertices belong to $B$, then $B$ has vertices of all the $k$ colors.
\end{proof}

The following two results are generalizations of a result given in \cite{MR2063820}, which provide sufficient and necessary conditions for bipartite digraphs with $\text{dib}(D)>2$.

\begin{theorem}\label{teo9}
Let $D=(A, B)$ be a disconnected bipartite digraph such that $\delta_D\geq 2$ and it is the union of $D_1, \dots,D_r$ weak bipartite subdigraphs. The component $D_i=(A_i,B_i)$ where $A=\bigcup_{i=1}^rA_i$ and $B=\bigcup_{i=1}^rB_i$. Then $\text{dib}(D)>2$ if and only if 
\begin{enumerate}
\item $r\geq 3$; or

\item $r=2$ and at least one of $D_1$, $D_2$ is not a complete symmetric bipartite digraph.

\end{enumerate}
\end{theorem}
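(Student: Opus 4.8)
The plan is to prove both implications, using throughout that $\delta_D\ge 2$ forces $\delta_{D_i}\ge 2$ for every component (there are no arcs between components), and that every bipartite digraph is acyclically $2$-coloured by its own bipartition, so inside a component it suffices, for acyclicity, to keep each colour class within a single part. Since $D$ is disconnected, $r\ge 2$, and the two listed conditions are together equivalent to saying we are \emph{not} in the case ``$r=2$ and both $D_1,D_2$ complete symmetric''. I will therefore argue: (only if) if $r=2$ and both components are complete symmetric bipartite, then $\text{dib}(D)\le 2$; and (if) in every other case $\text{dib}(D)\ge 3$, by exhibiting an acyclic $b$-coloring with exactly three colours.

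For the only-if part, suppose $D=D_1\cup D_2$ with both components complete symmetric and assume, for contradiction, an acyclic $b$-coloring with $k\ge 3$ colours. In a complete symmetric component any two vertices in opposite parts form a digon, so a colour occurring in both parts of $D_i$ would create a monochromatic $2$-cycle; hence in each $D_i$ every colour class lies in a single part. If $x$ is a $b^+$-vertex of colour $c$ in a part $P$ of $D_i$, then $N^+_{D}(x)$ is the whole opposite part $P'$, so $P'$ must contain all $k-1$ colours $\ne c$; as the colour sets of the two parts are disjoint, this forces the colour partition of $D_i$ to be exactly $(\{c\},\{1,\dots,k\}\setminus\{c\})$, and the same conclusion holds for a $b^-$-vertex. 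Consequently each component can host $b$-vertices of at most one colour, so the two components witness at most two colours, contradicting $k\ge 3$; thus $\text{dib}(D)\le 2$.

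For the if-direction I first record a building block: \emph{any} component $H=(A',B')$ with $\delta_H\ge 2$ admits, for a prescribed colour $c$ and two auxiliary colours, an acyclic coloring in which some vertex is a (full) $b$-vertex of colour $c$ and every class stays inside one part. Put colour $c$ on all of $A'$, fix $u\in A'$, and $2$-colour $B'$ with the auxiliary colours so that both $N^+_{D}(u)$ and $N^-_{D}(u)$ receive both auxiliary colours; this is possible because $|N^+_{D}(u)|,|N^-_{D}(u)|\ge 2$ (two sets of size at least two can be $2$-coloured so that each is non-monochromatic). Then $u$ is a $b$-vertex of colour $c$, and the three classes lying in single parts makes the coloring acyclic. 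When $r\ge 3$ I apply this in $D_1,D_2,D_3$ to witness colours $1,2,3$, colour the remaining components by their bipartitions, and note that each global colour class is a disjoint union of sets each inside one part of one component, hence edgeless and acyclic; all three colours have a $b$-vertex, so $\text{dib}(D)\ge 3$.

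The real work is the case $r=2$ with, say, $D_1$ not complete symmetric. Here $D_2$ witnesses colour $3$ by the building block, so it suffices to colour $H:=D_1$ so that \emph{both} colours $1$ and $2$ acquire a $b^+$- and a $b^-$-vertex, colour $3$ serving only as filler. Since $H$ is not complete symmetric, some pair $a_0\in A_1,\ b_0\in B_1$ misses an arc; after possibly reversing all arcs of $D$ (which preserves every hypothesis and interchanges $b^+$ with $b^-$) I may assume $a_0\not\to b_0$. Colour $a_0,b_0$ with $3$, all other vertices of $A_1$ with $1$, and all other vertices of $B_1$ with $2$. Classes $1$ and $2$ lie in single parts and class $3=\{a_0,b_0\}$ carries at most the single arc $b_0\to a_0$, so the coloring is acyclic. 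The missing arc gives $a_0\notin N^-_{D}(b_0)$ and $b_0\notin N^+_{D}(a_0)$: hence all ($\ge 2$) in-neighbours of $b_0$ avoid $a_0$ (yielding a colour-$1$ $b^+$-vertex) and all ($\ge 2$) out-neighbours of $a_0$ avoid $b_0$ (yielding a colour-$2$ $b^-$-vertex), while the two remaining witnesses come from the out-neighbours of $b_0$ and the in-neighbours of $a_0$, at most one of which can equal $a_0$, resp.\ $b_0$. I expect the main obstacle to be exactly this bookkeeping: verifying, from only $\delta_{D_1}\ge 2$ and the single missing arc, that colours $1$ and $2$ each simultaneously gain both a $b^+$- and a $b^-$-vertex while no colour class picks up a directed cycle.
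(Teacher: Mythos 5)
Your proposal is correct. Its constructive (sufficiency) half is essentially the paper's own argument: for $r\ge 3$ you plant a full $b$-vertex of a distinct color in each of three components, and for $r=2$ your coloring of $D_1$ --- the non-digon pair $\{a_0,b_0\}$ in the third color, $A_1\setminus\{a_0\}$ in color $1$, $B_1\setminus\{b_0\}$ in color $2$, with $D_2$ hosting the third color's $b$-vertex --- is exactly the paper's construction, except that where the paper appeals to the connectivity of $D_1$ to produce the $b$-pairs of colors $1$ and $2$, you extract them explicitly from $N^{\pm}(a_0)$ and $N^{\pm}(b_0)$ using $\delta_{D_1}\ge 2$ and the missing arc; that bookkeeping does close (each in-neighbor of $b_0$ avoids $a_0$, hence has color $1$, sees $b_0$ in color $3$, and by $\delta\ge 2$ has a second out-neighbor in $B_1\setminus\{b_0\}$ of color $2$; the other three witnesses are symmetric, with at most one element of $N^{+}(b_0)$, resp.\ $N^{-}(a_0)$, lost to $a_0$, resp.\ $b_0$). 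The genuine divergence is in the necessity direction. The paper dispatches ``$r=2$, both components complete symmetric'' in one line by citing that such components have dichromatic and diachromatic number $2$; you instead argue structurally that in a complete symmetric component every color class lies in a single part (else a monochromatic digon), that a $b^{+}$- or $b^{-}$-vertex of color $c$ forces its part to be monochromatic in $c$ and the opposite part to carry all remaining colors, hence each component hosts $b$-vertices of at most one color, so two components cannot support $k\ge 3$ classes. Your route buys real rigor here: one cannot simply invoke $\text{dib}(D)\le\text{dac}(D)$, because the diachromatic number of the union itself is at least $3$ whenever all four parts have size at least $2$ (which $\delta_D\ge 2$ forces): taking $a\in A_1$ and $a'\in A_2$, the classes $(A_1\setminus\{a\})\cup B_2$, $B_1\cup\{a'\}$, and $\{a\}\cup(A_2\setminus\{a'\})$ form a complete acyclic $3$-coloring of $D$ that is not a $b$-coloring. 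So this case genuinely requires an argument about $b$-vertices rather than completeness alone, and your proof supplies one that the paper only gestures at.
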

\begin{proof}
Suppose $D$ has no $b$-colorings with at least $3$ colors, by Proposition \ref{prop5}, $\text{dib}(D)=2$. Note that each component $D_i$ has at least 4 vertices.

\begin{enumerate}
\item Case $r\geq 3$. We can easily color a vertex in $D_i$ to be $b$-vertex of color $c_i$, for $i=1,2,3$.

\item Case $r= 2$. Assume $D_1$ is not complete bipartite. Let $u \in A$ and $v \in B$ be, such that $uv$ is not a digon in $D_1$. Color $u$ and $v$ by color $c_1$, the vertices of $A \setminus \{u\}$ by color $c_2$ and all vertices of $B \setminus \{v\}$ by color $c_3$. Because of the connectivity of $D_1$ there is a $b$-pair of color
$c_2$ in $A$ and a $b$-pair of color $c_3$ in $B$. Now, we can easily color a vertex in $D_2$ to be $b$-vertex of color $c_1$.

On the other hand, if $D_1$ and $D_2$ are complete symmetric bipartite digraphs, its dichromatic and diachromatic number is $2$, then $\text{dib}(D) = 2$.

\end{enumerate}
\end{proof}

Let $v$ be a vertex in $D$, let $\overline{N}\text{\,}^  +(v)=\{u\in V(D):vu \notin E(D), v\neq u \}$ and  $\overline{N}\text{\,}^-(v)=\{w\in V(D):wv \notin E(D), v\neq w \}$ denote the out-non-neighborhood and in-non-neighborhood of $v$, respectively. We define the sets $\overline{N}_{\cup}(v)= \overline{N}\text{\,}^+(v) \cup \overline{N}\text{\,}^-(v)$ and $\overline{N}_{\cap}(v)= \overline{N}\text{\,}^+(v) \cap \overline{N}\text{\,}^-(v)$.

\begin{theorem}\label{teo10}
Let $D=(A, B)$ be a weakly connected bipartite digraph such that $\delta_D\geq 2$.
\begin{enumerate}
\item If $\text{dib}(D)>2$ then $A \subseteq \bigcup _{v\in B} \overline{N}_{\cup}(v)$ or $B \subseteq \bigcup _{v\in A} \overline{N}_{\cup}(v)$; and
    
\item If $A \subseteq \bigcup _{v\in B} \overline{N}_{\cap}(v)$ or $B \subseteq \bigcup _{v\in A} \overline{N}_{\cap}(v)$ then $\text{dib}(D)>2$.
\end{enumerate}
\end{theorem}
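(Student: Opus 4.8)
The plan is to prove Theorem~\ref{teo10} in two parts, exploiting the structural constraints that Lemma~\ref{lema8} places on any $b$-coloring with $k\geq 3$ colors. Throughout, let me recall the key fact: if $\text{dib}(D)>2$, there is a $b$-coloring with $k\geq 3$ colors, and by Lemma~\ref{lema8} one partition---say $A$---carries all $k$ colors while $B$ carries at least $k-1$. Let me think about what $\overline{N}_{\cup}(v)$ and $\overline{N}_{\cap}(v)$ encode. A vertex $u\in\overline{N}_{\cup}(v)$ means $v$ fails to point to $u$ \emph{or} $u$ fails to point to $v$; a vertex $u\in\overline{N}_{\cap}(v)$ means \emph{both} arcs $vu$ and $uv$ are absent, i.e.\ there is no digon and in fact no arc at all between $u$ and $v$.

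For part (1), the contrapositive is cleaner to set up: assume $A \not\subseteq \bigcup_{v\in B}\overline{N}_{\cup}(v)$ \emph{and} $B \not\subseteq \bigcup_{v\in A}\overline{N}_{\cup}(v)$; I want to conclude $\text{dib}(D)=2$. The first failure says there is some $a^\ast\in A$ lying in no $\overline{N}_{\cup}(v)$, meaning for every $v\in B$ we have $a^\ast\notin\overline{N}\text{\,}^+(v)$ and $a^\ast\notin\overline{N}\text{\,}^-(v)$, i.e.\ both arcs $va^\ast$ and $a^\ast v$ are present for all $v\in B$---so $a^\ast$ is joined by a digon to \emph{every} vertex of $B$. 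Symmetrically there is $b^\ast\in B$ joined by a digon to every vertex of $A$. First I would argue that the existence of such ``universal'' vertices forces the digraph to behave like a complete symmetric bipartite digraph in a way incompatible with a $b$-coloring on $k\geq 3$ colors. The idea is that these universal vertices constrain how colors can be distributed: the $b^+$- and $b^-$-vertices required in each class must route through the neighborhoods, and the saturation coming from $a^\ast$ and $b^\ast$ collapses the color count. The main obstacle I anticipate is exactly here---translating ``$a^\ast$ is digon-adjacent to all of $B$'' into a rigorous recoloring or counting argument that drives $k$ down to $2$, since having one universal vertex is weaker than $D$ being complete symmetric, and I will likely need to combine it with the minimum-degree hypothesis $\delta_D\geq 2$ and the acyclicity constraint.

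For part (2), the hypothesis $A\subseteq\bigcup_{v\in B}\overline{N}_{\cap}(v)$ is much stronger and more workable: every vertex $a\in A$ is \emph{nonadjacent} (no arc in either direction) to some $b(a)\in B$. The plan is a direct construction of a $b$-coloring with $3$ colors. I would pick a witnessing pair---a vertex $a_0\in A$ and $b_0\in B$ with no arc between them, which exists since $A$ is nonempty and the union covers it---and attempt to color $\{a_0,b_0\}$ with color $c_1$, then split the remaining vertices of $A$ and $B$ using colors $c_2$ and $c_3$, mirroring the construction in the proof of Theorem~\ref{teo9}, Case $r=2$. The nonadjacency supplied by $\overline{N}_{\cap}$ guarantees that the class $c_1$ stays acyclic (no digon between its two vertices) and that I can locate the required $b^+$- and $b^-$-vertices in each class by using $\delta_D\geq 2$ to guarantee enough incoming and outgoing arcs into the other color classes.

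The crux of part (2) will be verifying the $b$-vertex conditions simultaneously for all three classes while respecting acyclicity, and in particular ensuring that the covering hypothesis lets me choose the color-$c_1$ pair so that the other two classes each contain both a $b^+$-vertex and a $b^-$-vertex pointing to (and pointed from) all other classes. I expect the minimum-degree condition $\delta_D\geq 2$ to be what supplies these witnesses, and the weak connectivity to guarantee arcs cross between $A$ and $B$ in the needed directions. The most delicate point---and the one I would spend the most care on---is confirming that the color class $c_1=\{a_0,b_0\}$ can always be chosen consistently with all four $b$-conditions of the classes $c_2,c_3$, which is where the full strength of $A\subseteq\bigcup_{v\in B}\overline{N}_{\cap}(v)$ (as opposed to $\overline{N}_{\cup}$) is essential, since it guarantees a genuinely missing edge rather than merely a missing digon.
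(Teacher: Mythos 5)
Both halves of your plan contain genuine gaps, and they are of different severity.

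For part (1), your contrapositive setup is sound and is essentially the paper's argument in disguise, but you stop exactly at the step that needs to be supplied, and the difficulty you anticipate (``a rigorous recoloring or counting argument that drives $k$ down to $2$'') is the wrong direction entirely. No recoloring is needed. Having extracted $a^\ast\in A$ digon-joined to every vertex of $B$ and $b^\ast\in B$ digon-joined to every vertex of $A$, suppose a $b$-coloring with $k\geq 3$ colors exists. By Lemma~\ref{lema8} one side carries all $k$ colors; say $A$ contains vertices $v_1,\dots,v_k$ of all $k$ colors (otherwise swap roles and use $a^\ast$). Then $b^\ast$ forms a digon with the vertex $v_j$ whose color equals the color of $b^\ast$, and a digon is a directed $2$-cycle, so the color class of $c_j$ contains a directed cycle, contradicting acyclicity of the coloring. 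That one observation finishes part (1); the paper states it directly (any $w\in B$ digon-joined to all of $v_1,\dots,v_k$ would create a monochromatic digon), rather than contrapositively, but the content is identical.

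For part (2) the gap is fatal to your construction. You cannot mirror Case $r=2$ of Theorem~\ref{teo9}: there, the class $\{u,v\}$ obtains its required $b$-vertex from the \emph{second component} $D_2$, and here $D$ is weakly connected, so no such reservoir exists. Concretely, with $c_1=\{a_0,b_0\}$, $c_2=A\setminus\{a_0\}$, $c_3=B\setminus\{b_0\}$, the class $c_1$ can never contain a $b^+$-vertex: all out-neighbors of $a_0$ lie in $B$, hence have color $c_3$ (or are $b_0$), so $a_0$ sees no vertex of color $c_2$; symmetrically $b_0$ sees no vertex of color $c_3$. Equivalently, your coloring puts only two colors on each side of the bipartition, which contradicts Lemma~\ref{lema8} --- the very lemma you invoke --- since a $b$-coloring with $3$ colors forces one side to carry all three. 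No choice of the nonadjacent pair $(a_0,b_0)$ repairs this. The paper's proof is structured precisely to avoid this trap: it uses the covering hypothesis to build color classes of the form $\{v_i\}\cup\bigl(\overline{N}_{\cap}(v_i)\cap B\bigr)$, which are independent sets (hence acyclic) meeting \emph{both} sides of the bipartition, covering all of $B$ with $k\geq 3$ classes (the degree condition $\delta_D\geq 2$ guarantees at least three), and then runs a color-reduction argument in the spirit of Lemma~\ref{existe}, maintaining the invariant that almost all classes meet both $A$ and $B$, until a $b$-coloring with at least $3$ colors is reached. So the role of $\overline{N}_{\cap}$ is to make entire classes arc-free, not merely to supply one missing edge between two singletons.
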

\begin{proof}
\begin{enumerate}

\item We assume there exists a $b$-coloring using $k\geq 3$ colors of $D$. By Lemma \ref{lema8}, the set $A$ or $B$ has vertices of all $k$ colors. Without loss of generality, $A$ contains vertices $v_i$ colored $c_i$ for $i\in\{1,\dots,k\}$. If there is $w\in B\setminus \bigcup _{i=1}^{k} \overline{N}_{\cup}(v_i)$, then $A\cup \{w\}$ are the vertices of a symmetric star and the coloring is not acyclic, hence, we have $B \subseteq \bigcup _{i=1}^{k} \overline{N}_{\cup}(v_i)\subseteq \bigcup _{v\in A_1} \overline{N}_{\cup}(v)$.

\item Now, suppose $D$ has no $b$-colorings with at least $3$ colors, by Proposition \ref{prop5}, $\text{dib}(D)=2$. Without loss of generality, suppose $B \subseteq \cup_{v\in A} \overline{N}_{\cap}(v)$. 

Let $w_1$ a vertex of $B$. By assumption, there exists a vertex $v_1 \in A$ such that $w_1\in \overline{N}_{\cap}(v_1)$, then there are no arc between $v_1$ and $w_1$, so color $v_1$ and $\overline{N}_{\cap}(v_1)\cap B$ by $c_1$. Since $\delta(v_1)\geq 2$, there exists at least one uncolored vertex $w_2 \in B$ and a vertex $v_2 \in A \setminus \{v_1\}$ such that $w_2\in \overline{N}_{\cap}(v_2)$, then $v_2w_2 \notin E(D)$ and $w_2v_2 \notin E(D)$. Color $v_2$, $w_2$ and the uncolored vertices of $\overline{N}_{\cap}(v_2) \cap B$ by $c_2$. Since $\delta(w_1)\geq 2$, there exists at least one uncolored vertex $v \in A$. We continue with this procedure  
until all vertices of $B$ are colored. If there are uncolored vertices in $A$, color them all with an additional color $c_i$, with $i\geq 3$. 
We use $k\geq 3$ colors and at least $k-1$ color classes have non-empty intersection with the sets $A$ and $B$ by construction. 
Suppose that the coloring is not a $b$-coloring, then there is a color class without a $b$-vertex or a $b$-pair. Recolor the vertices of this color class obtaining an acyclic coloring with $k-1$ colors. Again at least $k-2$ color classes have non-empty intersection with the sets $A$ and $B$. Continue this process until a $b$-coloring
is obtained. Since a $b$-coloring using $2$ colors is not possible, a contradiction to the assumption. Hence the result is a $b$-coloring with at least $3$ colors.

\end{enumerate}
\end{proof}

\subsection{On simple bipartite digraphs}

Recall a simple digraph is a digraph without digons, that is, an oriented graph.

\begin{theorem}\label{d=3} Let $D$ be a simple bipartite  digraph.  If $\delta(D)\geq 2$ then $\text{dib}(D)\geq 3$.     
\end{theorem}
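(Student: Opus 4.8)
The plan is to split off the disconnected case for free and then build an explicit acyclic $b$-coloring with three colors in the weakly connected case. First I would dispose of disconnected $D$ using Theorem \ref{teo9}: if $\delta(D)\ge 2$ and $D$ is disconnected, it is the union of $r\ge 2$ weak components, each itself a simple (digon-free) bipartite digraph with $\delta\ge 2$. Since a complete symmetric bipartite digraph necessarily contains digons, no component of a \emph{simple} $D$ can be complete symmetric; hence either $r\ge 3$, or $r=2$ with (both, in fact) components not complete symmetric, and in either case Theorem \ref{teo9} already gives $\text{dib}(D)>2$, that is, $\text{dib}(D)\ge 3$. So from now on I may assume $D$ is weakly connected.

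By Lemma \ref{existe} it suffices to exhibit an acyclic $3$-coloring that is $b$-irreducible; equivalently, I will directly construct an acyclic $3$-coloring in which every class carries a $b^+$-vertex and a $b^-$-vertex. The orientation hypothesis enters through a local abundance of arcs: since $\delta(D)\ge 2$ and $D$ has no digons, every vertex $x$ has at least two out-neighbours and at least two in-neighbours, and these are pairwise distinct (an out-neighbour that were also an in-neighbour would form a digon). In particular, fixing $a_0\in A$, I can choose distinct $b_1,b_2\in N^+_D(a_0)$ and distinct $b_3,b_4\in N^-_D(a_0)$ in $B$, all four distinct.

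The coloring template keeps one class inside a single part and the other two \emph{thin} in that part, which makes acyclicity automatic. Concretely, put color $1$ on a set $A_1\subseteq A$ containing $a_0$, choose two further vertices $a_2,a_3\in A\setminus A_1$ to anchor colors $2$ and $3$, and color $B$ with colors $2$ and $3$ only, writing $B=B_2\sqcup B_3$, with $a_2\in C_2$ and $a_3\in C_3$. Then $C_1\subseteq A$ is independent, while each of $C_2,C_3$ meets $A$ in a single vertex; since a directed cycle of a bipartite digraph uses at least two vertices of each part, no class contains a directed cycle, so the coloring is acyclic. It remains to force a $b$-pair in each class. Placing $b_1,b_3\in B_2$ and $b_2,b_4\in B_3$ makes $a_0$ a $b$-vertex of color $1$; for class $2$ I then need some $w\in B_2$ with $w\to a_3$ and $w\to A_1$, and some $w'\in B_2$ with $a_3\to w'$ and $A_1\to w'$, and symmetrically for class $3$ using $a_2$ and $B_3$.

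The main obstacle is exactly this last step: realizing the $b$-pairs of classes $2$ and $3$ simultaneously, when the partition $B=B_2\sqcup B_3$ is forced to be global. The degree condition supplies at each anchor at least two in- and two out-neighbours to distribute, and weak connectivity lets me route the auxiliary arcs into $A_1$; the trouble lies in sparse, tight instances (for example a suitably oriented $K_{4,4}$, where every vertex has exactly two in- and two out-neighbours and, as one checks, the sufficient condition of Theorem \ref{teo10}(2) fails), in which a careless choice of $a_2,a_3$ can leave some in-neighbour of $a_3$ with its entire out-neighbourhood inside $\{a_2,a_3\}$. I expect to resolve this by choosing the anchors to be \emph{low-conflict}, using $\delta(D)\ge 2$ and $|A|\ge 4$ to pick $a_2,a_3$ whose relevant neighbours spread into $A_1$; and, failing a direct partition, by falling back on Lemma \ref{existe}, since any acyclic $3$-coloring of the above shape that cannot be collapsed to two colors is automatically a $b$-coloring, so it is enough to argue that the cross arcs guaranteed by connectivity obstruct every redistribution of one class into the other two.
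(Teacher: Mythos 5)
Your reduction of the disconnected case is correct: a complete symmetric bipartite component would contain digons, so for a simple $D$ with $\delta(D)\geq 2$ either $r\geq 3$ or $r=2$ with neither component complete symmetric, and Theorem \ref{teo9} gives $\text{dib}(D)>2$. Your template for the weakly connected case is also sound as far as it goes: with $C_1=A\setminus\{a_2,a_3\}$, $C_2=\{a_2\}\cup B_2$, $C_3=\{a_3\}\cup B_3$, acyclicity is automatic, $a_0$ becomes a $b$-vertex of $C_1$ once its four neighbours are split between $B_2$ and $B_3$, and you correctly observe that the $b^+$- and $b^-$-vertices of $C_2$ and $C_3$ must lie in $B$ and must use the opposite anchor as their cross-colour neighbour. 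But the proof stops exactly where the theorem's difficulty begins: you never prove that anchors $a_2,a_3$ and a partition $B=B_2\sqcup B_3$ satisfying all five constraints simultaneously exist. You call this ``the main obstacle,'' propose to pick ``low-conflict'' anchors, and ``expect'' to resolve it --- that is a plan, not an argument. The worry is real: in the $2$-in-$2$-out orientation of $K_{4,4}$ given by $a_i\to b_i,b_{i+1}$ and $b_i\to a_{i+1},a_{i+2}$ (indices mod $4$), one checks that for the four ``adjacent'' anchor pairs the constraints are contradictory (the vertex of $B$ required to be in $B_2$ by one constraint is forced into $B_3$ by another), while the two ``diagonal'' pairs succeed; so a genuine selection argument is required, and none is given.

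The fallback via Lemma \ref{existe} does not rescue this. That lemma gives only the implication $b$-irreducible $\Rightarrow$ $b$-coloring; hence if, for a given choice of anchors, no coloring of your shape is a $b$-coloring (which, as above, does happen), then no coloring of that shape is $b$-irreducible either --- every one of them can be collapsed to two colours. So ``arguing that the cross arcs obstruct every redistribution'' is not a weaker goal than the one you are stuck on; it is the same goal in different words. The missing combinatorial core is what the paper supplies by entirely different means: it assumes $\text{dib}(D)\leq 2$, introduces \emph{bad paths} (alternating source/sink paths), and through Claims \ref{b8}--\ref{b4} shows that a maximum bad path has order at most $7$, then $6$, then $5$, then $4$, that $D$ must be $2$-regular, and finally that no bad path of order $4$ exists, contradicting $\delta(D)\geq 2$. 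Each step exhibits an explicit $3$-coloring whose classes are ``thin'' in one part (the same structural trick you use) but anchored on the extremal bad path, which is precisely the device that guarantees the required $b$-pairs; note also that those colorings need no connectivity hypothesis, so the paper avoids your case split altogether. Without an analogue of that analysis, your proposal establishes the theorem only in the disconnected case.
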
 
\begin{proof} Let $D=(A,B)$ be a simple bipartite  digraph, with $\delta(D)\geq 2$ and suppose  $\text{dib}(D)\leq 2$. 
A (non-directed) path $P = (x_1, \dots, x_k)$ in $D$ will be called \emph{bad} if every $x_{2i}\in V(P)$, with $1\leq 2i\leq k$, $x_{2i}$ is a sink in $P$ (respectively, $x_{2i}$ is a source in $P$); and every $x_{2i-1}\in V(P)$, with $1\leq 2i-1\leq k$, $x_{2i-1}$ is a source in $P$ (respectively, $x_{2i-1}$ is a sink in $P$).

Let $P=(x_1, \dots, x_k)$ be a bad path in $D$ of maximum order. Without loss of generality, we will assume that every $x_{2i}\in V(P)$, with $1\leq 2i\leq k$, $x_{2i}$ is a sink in $P$; every $x_{2i-1}\in V(P)$, with $1\leq 2i-1\leq k$, $x_{2i-1}$ is a source in $P$ and that  the sources are in $A$ and the sinks are in $B$.

\begin{claim}\label{b8} $k\leq 7$.
\end{claim}
\begin{proof} Suppose there is a bad path $(x_1, x_2, \dots, x_k)$ in $D$ with $k\geq 8$.   Let $\Gamma: V(D) \rightarrow \{c_1, c_2, c_3\}$ defined as: $\Gamma(x_1) = \Gamma(x_4) = \Gamma(x_7) = c_1$; $\Gamma(x_2) = \Gamma(x_5) = \Gamma(x_8)= c_2$ and $\Gamma(x_3) = \Gamma(x_6) = c_3$. Also, for every $x\in A\setminus\{x_1, x_3, x_5, x_7\}$, $\Gamma(x) = c_1$ and for every $x\in B\setminus\{x_2, x_4, x_6, x_8\}$, $\Gamma(x) = c_3$. It is not hard to see that this is a $b$-coloring of $D$ with 3 colors, where $x_2, x_4$ and $x_6$ are $b^-$-vertices and $x_3, x_5$ and $x_7$ are $b^+$-vertices,  which is not possible. 
\end{proof}

\begin{claim}\label{b7} $k\leq 6$.
\end{claim}
\begin{proof} Suppose there is a bad path $P=(x_1, x_2, \dots, x_7)$ in $D$. By Claim \ref{b8} there is no bad path of order $8$, thus $N^+(x_7) \subseteq V(P)$ and since $\delta(D) \geq 2$,  either $x_7x_2\in E(D)$ or $x_7x_4\in E(D)$. If $x_7x_2\in E(D)$ then in $D$ there is the undirected cycle $C_6= (x_2, x_3, x_4, x_5, x_6, x_7)$.  Let $\Gamma: V(D) \rightarrow \{c_1, c_2, c_3\}$ defined as: $\Gamma(x_3) = \Gamma(x_6) = c_1$; $\Gamma(x_4) = \Gamma(x_7) = c_2$ and $\Gamma(x_2) = \Gamma(x_5) = c_3$. Also, for every $x\in A\setminus\{x_3, x_5, x_7\}$, $\Gamma(x) = c_1$ and for every $x\in B\setminus\{x_2, x_4, x_6\}$, $\Gamma(x) = c_3$. It is not hard to see that this is a $b$-coloring of $D$ with 3 colors, where $x_2, x_4$ and $x_6$ are $b^-$-vertices and $x_3, x_5$ and $x_7$ are $b^+$-vertices, which is not possible. 

Hence, $x_7x_4\in E(D)$ and, by an analogous argument than above, $x_1x_4\in E(D)$. Thus, $\{x_1, x_3, x_5, x_7\}\subseteq N^-(x_4)$ and since $\delta(D) \geq 2$, there is a pair $\{z_1, z_2\}\subseteq N^+(x_4)$ in $A\setminus\{x_1, x_3, x_5, x_7\}$. 
Let $\Gamma: V(D) \rightarrow \{c_1, c_2, c_3\}$ defined as: $\Gamma(x_1) = \Gamma(x_4)= \Gamma(x_5)= c_1$; $\Gamma(x_2) = \Gamma(x_7) =\Gamma(z_1) = c_2$ and $\Gamma(x_3) = \Gamma(x_6) = \Gamma(z_2) = c_3$. Also, for every $x\in A\setminus\{z_1, z_2, x_1, x_3, x_5, x_7\}$, $\Gamma(x) = c_3$ and for every $x\in B\setminus\{x_2, x_4, x_6\}$, $\Gamma(x) = c_2$. It is not hard to see that this is a $b$-coloring of $D$ with 3 colors, where $x_4$ is a $b$-vertex, $x_2$ and  $x_6$ are $b^-$-vertices; and $x_3$ and $x_7$ are $b^+$-vertices, which is not possible. 
\end{proof}

\begin{claim}\label{b6} $k\leq 5$.
\end{claim}
\begin{proof} Suppose there is a bad path $P=(x_1, x_2, \dots, x_6)$ in $D$. By Claim \ref{b7} there is no bad path of order $7$, thus $N^+(x_1) \subseteq V(P)$ and since $\delta(D) \geq 2$,  either $x_1x_6\in E(D)$ or $x_1x_4\in E(D)$. If $x_1x_6\in E(D)$ then in $D$ there is the undirected cycle $C_6= (x_1, x_2, x_3, x_4, x_5, x_6)$. As in Claim \ref{b7} we see that this is not possible. Thus  $x_1x_4\in E(D)$. In an analogous way we see that $x_3x_6\in E(D)$.  Let $\Gamma: V(D) \rightarrow \{c_1, c_2, c_3\}$ defined as: $\Gamma(x_1) = \Gamma(x_6) = c_1$; $\Gamma(x_3) = \Gamma(x_4) = c_2$ and $\Gamma(x_2) = \Gamma(x_5) = c_3$. Also, for every $x\in A\setminus\{x_1, x_3, x_5\}$, $\Gamma(x) = c_1$ and for every $x\in B\setminus\{x_2, x_4, x_6\}$, $\Gamma(x) = c_3$. It is not hard to see that this is a $b$-coloring of $D$ with 3 colors, where $x_1, x_3$ and $x_5$ are $b^+$-vertices; and $x_2, x_4$ and $x_6$ are $b^-$-vertices, which is not possible. 
\end{proof}

\begin{claim}\label{b5} $k\leq 4$.
\end{claim}
\begin{proof} Suppose there is a bad path $P=(x_1, x_2, \dots, x_5)$ in $D$. By Claim \ref{b6} there is no bad path of order $6$, thus $N^+(x_1) \subseteq V(P)$ and since $\delta(D) \geq 2$,  $x_1x_4\in E(D)$. In an analogous way, we see that $x_5x_2\in E(D)$. Moreover, $d^+(x_1) = d^+(x_3) = d^+(x_5)=2$. Thus, $\{x_1, x_3, x_5\}\subseteq N^-(x_2)$ and $\{x_1, x_3, x_5\}\subseteq N^-(x_4)$ and therefore $N^+(x_2), N^+(x_4)\subseteq A\setminus \{x_1, x_3, x_5\}$. Let $\{z_1, z_2\}\subseteq N^+(x_2)$ and $\{w_1, w_2\}\subseteq N^+(x_4)$. 

\noindent {\bf Case 1.} $z_1= w_1$ and $z_2 = w_2$.

\noindent Since $\delta(D) \geq 2$ and $\{x_2, x_4\}\subseteq N^-(z_1)$, there is a pair $\{y_1, y_2\}\subseteq N^+(z_1)$ such that $\{y_1, y_2\} \subseteq B\setminus\{x_2, x_4\}$. Let $\Gamma: V(D) \rightarrow \{c_1, c_2, c_3\}$ defined as: $\Gamma(x_2) =\Gamma(x_5) = \Gamma(z_1) = c_1$; $\Gamma(x_1)  = \Gamma(x_4)  = \Gamma(y_2) =  c_2$ and $\Gamma(x_3) = \Gamma(y_1) = \Gamma(z_2) = c_3$. Also, for every $x\in A\setminus\{z_1, z_2, x_1, x_3, x_5\}$, $\Gamma(x) = c_1$ and for every $x\in B\setminus\{x_2, x_4, y_1, y_2\}$, $\Gamma(x) = c_2$. It is not hard to see that this is a $b$-coloring of $D$ with 3 colors, where $x_4$ is a  $b$-vertex,  $x_3$ and $z_1$ are $b^+$-vertices; and $x_2$ and $z_2$ are $b^-$-vertices,  which is not possible. 

\noindent {\bf Case 2.} $z_1= w_1$ and $z_2 \not=  w_2$.

\noindent  Let $\Gamma: V(D) \rightarrow \{c_1, c_2, c_3\}$ defined as: $\Gamma(x_1) = \Gamma(x_4) = \Gamma(z_2) = c_1$; $\Gamma(x_5)  = \Gamma(z_1) = c_2$ and $\Gamma(x_2) = \Gamma(x_3) = \Gamma(w_2) = c_3$. Also, for every $x\in A\setminus\{z_1, z_2, w_2, x_1, x_3, x_5\}$, $\Gamma(x) = c_3$ and for every $x\in B\setminus\{x_2, x_4\}$, $\Gamma(x) = c_1$. It is not hard to see that this is a $b$-coloring of $D$ with 3 colors, where $x_2$ and $x_4$ are  $b$-vertex, and $x_5$ and $z_1$ is a $b^+$-vertex and a $b^-$-vertex, respectively; which is not possible. 

\noindent {\bf Case 3.}  $\{z_1, z_2\}\cap \{w_1, w_2\} = \emptyset$.

\noindent  Since $w_1 \not \in N^+(x_2)$ and  $\delta(D) \geq 2$ there is $y_1\in N^-(w_1)\cap (B\setminus \{x_2, x_4\})$. 
Let $\Gamma: V(D) \rightarrow \{c_1, c_2, c_3\}$ defined as: $\Gamma(x_1) =\Gamma(x_4) = \Gamma(z_1) = c_1$; $\Gamma(x_2)  = \Gamma(x_3)  = \Gamma(y_1) =\Gamma(w_2) =  c_2$ and $\Gamma(x_5) = \Gamma(w_1) = \Gamma(z_2) = c_3$. Also, for every $x\in A\setminus\{z_1, z_2, w_1, w_2, x_1, x_3, x_5\}$, $\Gamma(x) = c_1$ and for every $x\in B\setminus\{x_2, x_4, y_1\}$, $\Gamma(x) = c_2$. It is not hard to see that the chromatic classes of colors $c_1$ and $c_3$ are acyclic. For the chromatic class of color $c_2$, observe that the only vertices of color $c_2$ in $A$ are $x_3$ and $w_2$. Thus, any directed cycle of color $c_2$ is a square and contains the vertices $\{x_3, w_2\}$. Since $d^+(x_3)= 2$ and $\Gamma (x_4)= c_1$, any directed cycle of color $c_2$ must contain the path $(x_3, x_2, w_2)$ which is not possible since $x_2w_2\not\in E(D)$. Hence, $\Gamma$ is a $b$-coloring of $D$ with 3 colors, where $x_2$ and $x_4$ are  $b$-vertices, and $x_5$ and $w_1$ is a $b^+$-vertex and a $b^-$-vertex, respectively; which is not possible.
\end{proof}

\begin{claim}\label{2reg} $D$ is $2$-regular.
\end{claim}
\begin{proof} Let $x\in V(D)$ with $d^+(x) =r\geq 2$ and let $N^+(x) = \{y_1, \dots, y_r\}$. By Claim \ref{b5} it follows that for some $z\in V(D)\setminus \{x, y_1, \dots, y_r\}$,  $\bigcap\limits_{y\in N^+(x)} N^-(y) = \{x, z\}$, which again, by Claim \ref{b5}, implies that $r=2$ (in other case, there is a bad path of order $5$ where $x_1, x_3$ and $x_5$ are sinks, and $x_2$, $x_4$ are sources). Thus, for every $x\in V(D)$, $d^+(x) =2$ and since $\delta(D) \geq 2$, the claim follows.
\end{proof}

\begin{claim}\label{b4} There are no bad paths of order $4$.
\end{claim}
\begin{proof}
Let $P=(x_1, x_2, x_3, x_4)$ be a bad path in $D$. By Claim \ref{b5} it follows that $x_1x_4\in E(D)$. By Claim \ref{2reg}, $\delta(D) = 2$ and therefore we see that $N^+(x_2) = \{z_1, z_2\}$ and $N^+(x_4) = \{w_1, w_2\}$ where $N^+(x_2), N^+(x_4) \subseteq A\setminus\{x_1, x_3\}$. Again, by Claim \ref{b5}, either $w_1= z_1$ and $z_2=w_2$;  or $\{z_1, z_2\}\cap \{w_1, w_2\}=\emptyset$. 

\noindent {\bf Case 1.} $\{z_1, z_2\}\cap \{w_1, w_2\}=\emptyset$.

\noindent By Claim \ref{b5} there is a pair  $\{y_1, y_2\}\subseteq B\setminus \{x_2, x_4\}$ such that $N^+(y_1) = \{z_1, z_2\}$ and $N^+(y_2) = \{w_1, w_2\}$. Let $\Gamma: V(D) \rightarrow \{c_1, c_2, c_3\}$ defined as: $\Gamma(x_1) =\Gamma(y_2) = \Gamma(z_1) = \Gamma(w_1) = c_1$; $\Gamma(x_2)  = \Gamma(x_3)  = \Gamma(w_2) = c_2$ and $\Gamma(x_4) = \Gamma(y_1) = \Gamma(z_2) = c_3$.  Also, for every $x\in A\setminus\{z_1, z_2, w_1, w_2, x_1, x_3, x_5\}$, $\Gamma(x) = c_2$ and for every $x\in B\setminus\{x_2, x_4, y_1, y_2\}$, $\Gamma(x) = c_3$. It is not hard to see that $\Gamma$ is a $b$-coloring of $D$ with 3 colors, where  $x_4$ is a  $b$-vertex;  $x_1$ and $x_2$ are $b^+$-vertices; and $z_1$ and $w_2$  are $b^-$-vertices, which is not possible.

\noindent {\bf Case 2.} $w_1= z_1$ and $z_2=w_2$.

\noindent Since $\delta(D) = 2$, $N^+(z_1)= \{y_1, y_2\}$ and $N^-(x_1)= \{y_3, y_4\}$ where $N^+(z_1), N^-(x_1) \subseteq B\setminus \{x_2, x_4\}$. 

Let $\Gamma: V(D) \rightarrow \{c_1, c_2, c_3\}$ defined as: $\Gamma(x_4)  = c_1$; $\Gamma(x_1)  = \Gamma(x_2)  = \Gamma(z_1) = c_2$ and $\Gamma(x_3) = \Gamma(z_2) = c_3$.  For the vertices $\{y_1, y_2, y_3, y_4\}$, it is possible that either $\{y_1,y_2\}= \{y_3, y_4\}$; or $\{y_1,y_2\}\cap \{y_3, y_4\}\emptyset$ or $y_1=y_3$ and $y_2\not=y_4$. In any case, color the vertices with colors $c_1$ and $c_3$ in a way that both colors are present in $N^-(x_1)$ an $N^+(z_1)$. 

Also, for every $x\in A\setminus\{z_1, z_2, x_1, x_3, x_5\}$, $\Gamma(x) = c_2$ and for every $x\in B\setminus\{x_2, x_4, y_1, y_2, y_3, y_4\}$, $\Gamma(x) = c_1$. It is not hard to see that $\Gamma$ is a $b$-coloring of $D$ with 3 colors, where  $x_4$ is a  $b$-vertex;  $x_3$ and $z_1$ are $b^+$-vertices; and $x_1$ and $z_2$  are $b^-$-vertices, which is not possible. \end{proof}

From Claim \ref{b4} we see that if $dib(D)\leq 2$, then there is no bad path of order $4$. Since $\delta(D)\geq 2$, this is not possible, and the result follows. 

\end{proof}

\begin{corollary} Let $D$ be a simple bipartite $2$-regular digraph. Then $dib(D) = 3$.
\end{corollary}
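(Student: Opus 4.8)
The plan is to establish $\text{dib}(D)=3$ by proving the two matching inequalities $\text{dib}(D)\ge 3$ and $\text{dib}(D)\le 3$ separately. For the lower bound I would simply invoke Theorem \ref{d=3}: a $2$-regular digraph satisfies $\delta(D)=2\ge 2$ and is simple and bipartite by hypothesis, so that theorem applies directly and yields $\text{dib}(D)\ge 3$ with no further work.

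The upper bound rests on a degree constraint that $2$-regularity imposes on $b^+$-vertices. Suppose $D$ admits an acyclic $b$-coloring with $k$ colors. By definition, each color class must contain a $b^+$-vertex, that is, a vertex $u$ incident to a vertex of every color distinct from its own; hence $u$ has at least one out-neighbor in each of the remaining $k-1$ classes, so $d^+(u)\ge k-1$. Since $D$ is $2$-regular we have $d^+(u)=2$ for every vertex, which forces $k-1\le 2$, i.e.\ $k\le 3$. Maximizing over all $b$-colorings gives $\text{dib}(D)\le 3$, and combining the two bounds yields the result. (Symmetrically, one could instead use the $b^-$-vertex guaranteed in each class together with the in-degree bound $d^-(u)=2$.)

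There is essentially no real obstacle here once the $b^+$-vertex degree bound is noticed: the argument is a one-line pigeonhole on out-degrees and requires neither connectivity of $D$ nor any case analysis, with Theorem \ref{d=3} supplying the entire lower bound. The only point worth stating cleanly is that the phrase ``incident to a vertex colored with each color different from the color of $u$'' in the definition of a $b^+$-vertex indeed means an out-arc to such a vertex, so that reaching $k-1$ distinct colors among the out-neighbors of $u$ genuinely demands $d^+(u)\ge k-1$.
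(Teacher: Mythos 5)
Your proof is correct and follows essentially the same route as the paper: the lower bound is Theorem \ref{d=3} applied verbatim, and your pigeonhole argument on out-degrees of $b^+$-vertices is exactly the justification behind the paper's stated upper bound $\text{dib}(D)\leq \max_{x\in V(D)}\{d^+_D(x), d^-_D(x)\}+1 = 3$, which the paper leaves as ``not hard to see.''
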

\begin{proof} By Theorem \ref{d=3} we see that $dib(D)\geq 3$, and it is not hard to see that $dib(D)\leq \max\limits_{x\in V(D)}\{d^+_D(x), d^-_D(x)\}+1 = 3$ and the result follows. 
    \end{proof}

From lemmas \ref{lemma7} and \ref{lema8} we obtain the following upper bound.

\begin{corollary}
Let $D =(A, B)$ be a simple bipartite digraph with $|A| = n\leq m = |B|$. If there is no source or there is no sink in $B$,  $\text{dib}(D)\leq n$.
\end{corollary}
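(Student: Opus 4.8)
The plan is to combine Lemma~\ref{lemma7} with Lemma~\ref{lema8}: first use Lemma~\ref{lemma7} to obtain $\text{dib}(D)\le n+1$, and then rule out equality, so that $\text{dib}(D)\le n$. Accordingly, I would suppose for contradiction that $\text{dib}(D)=n+1$ and fix an acyclic $b$-coloring $\Gamma$ with $k=n+1$ colors. For the main argument I assume $n\ge 2$, so that $k\ge 3$ and Lemma~\ref{lema8} is available; the degenerate case $n=1$ (where $k=2$ and Lemma~\ref{lema8} does not apply) is handled directly, since if $B$ has no source then the unique vertex of $A$ has no in-neighbor, so its color class contains no $b^-$-vertex, precluding any $b$-coloring with two colors, and symmetrically if $B$ has no sink.

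Next I would extract the structure forced by Lemma~\ref{lema8}. Since $|A|=n=k-1<k$, the side $A$ cannot carry all $k$ colors, so by that lemma $B$ carries all $k$ colors while $A$ carries at least $k-1=n$ of them. As $|A|=n$, the $n$ vertices of $A$ then receive pairwise distinct colors, and exactly one color, say $c^\ast$, is absent from $A$; hence its class $C^\ast=\Gamma^{-1}(c^\ast)$ is contained in $B$. Because $\Gamma$ is a $b$-coloring, $C^\ast$ contains a $b^+$-vertex $x$ and a $b^-$-vertex $y$, both lying in $B$.

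The key step is to pin down the arcs at $x$ and $y$. Every arc leaving $x\in B$ ends in $A$, and since $x$ is a $b^+$-vertex it has an out-neighbor of each of the $k-1$ colors different from $c^\ast$; these are exactly the $n$ colors occurring in $A$, each on a single vertex, so $N^+(x)=A$. Symmetrically, $N^-(y)=A$. Now the source/sink hypothesis finishes the argument: if $B$ has no source, then $x$ has an in-neighbor, which must be some $a\in A$, but $x\to a$ already holds because $N^+(x)=A$, so $\{a,x\}$ is a digon, contradicting that $D$ is simple; if instead $B$ has no sink, then $y$ has an out-neighbor $a\in A$, while $a\to y$ holds because $N^-(y)=A$, again a digon. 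Either way we reach a contradiction, so $\text{dib}(D)\le n$.

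I expect the main obstacle to be the middle step: deducing from Lemma~\ref{lema8} that each vertex of $A$ is uniquely colored and that the class of the missing color sits entirely inside $B$, together with the observation that a $b^+$- or $b^-$-vertex trapped in $B$ must be completely joined to $A$ in one direction. Once $N^+(x)=A$ and $N^-(y)=A$ are established, the role of simplicity (absence of digons) is precisely to convert the ``no source in $B$'' or ``no sink in $B$'' hypothesis into a forbidden digon, which is the crux of the proof.
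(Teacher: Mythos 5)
Your proof is correct and takes essentially the same route as the paper's: Lemma~\ref{lemma7} gives $\text{dib}(D)\le n+1$, Lemma~\ref{lema8} forces the class of the color missing from $A$ entirely into $B$, and simplicity (no digons) turns that class's $b^+$- and $b^-$-vertices into a source and a sink of $B$ — your digon contradiction is just the contrapositive of the paper's conclusion. Your separate treatment of the degenerate case $n=1$ is a small bonus the paper glosses over, since Lemma~\ref{lema8} indeed requires $k\ge 3$.
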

\begin{proof} By Lemma \ref{lemma7},  $\text{dib}(D)\leq n+1$. If $\text{dib}(D)= n+1$,
by Lemma \ref{lema8}, there are no vertices of some color $c_{n+1}$ in $A$, and $b^+$-vertices and $b^-$-vertices of $c_1\dots, c_{n}$ belong to $A$. Hence, each vertex in $A$ is a $b$-vertex, and the  $b^+$-vertex and $b^-$-vertex of color $c_{n+1}$ are a source and a sink in $B$, respectively.
\end{proof}

Now we present some lower bounds. For this, given a digraph $D =(A, B)$, let $\delta_A = \min\{d^+_D(x), d^-_D(x) : x\in A\}$ and  $\delta_B = \min\{d^+_D(x), d^-_D(x) : x\in B\}$.

\begin{theorem}\label{partitions} Let $D =(A, B)$ be a simple bipartite  digraph, with $|A| = n\leq m = |B|$. If 
$$2n^2 <  \Big(\frac{m}{m-\delta_A}\Big)^{\lfloor\frac{m}{n}\rfloor}$$ then $dib(D)\geq n$. Moreover, there is a $b$-coloring of $D$ with $n$ colors such that every vertex in $A$ is a $b$-vertex. 
\end{theorem}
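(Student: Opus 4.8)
The plan is to build the desired colouring in two independent pieces: a deterministic assignment on $A$ and a random one on $B$. First I would colour the $n$ vertices of $A$ bijectively with the colours of $[n]$, so that each colour class is $\{a_i\}\cup B_i$, where $a_i$ is the unique $A$-vertex of colour $i$ and $B_i\subseteq B$ is the set of $B$-vertices receiving colour $i$. The first point to record is that any such colouring is automatically acyclic: since $B$ is independent and $D$ has no digons, a directed cycle inside a class $\{a_i\}\cup B_i$ would have to alternate between $A$ and $B$ and hence revisit the single available $A$-vertex $a_i$, which is impossible. Thus acyclicity comes for free, and the entire problem reduces to choosing the colouring of $B$ so that every $a_i$ becomes a $b$-vertex; that is, for every $i$ and every colour $j\neq i$ one needs $N^+(a_i)\cap B_j\neq\emptyset$ and $N^-(a_i)\cap B_j\neq\emptyset$.

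To produce such a colouring of $B$ I would use the probabilistic method. Let the colouring of $B$ be a uniformly random equitable partition into $n$ classes, so that each $B_j$ has size $\lfloor m/n\rfloor$ or $\lceil m/n\rceil$, and in particular $|B_j|\ge\lfloor m/n\rfloor$. For a fixed $i$ and $j\neq i$, call $E^{+}_{i,j}$ the bad event $N^+(a_i)\cap B_j=\emptyset$ and $E^{-}_{i,j}$ the bad event $N^-(a_i)\cap B_j=\emptyset$. There are exactly $2n(n-1)<2n^2$ such events, and every $a_i$ is a $b$-vertex precisely when none of them occurs, so by the union bound it suffices to show that each single bad event has probability strictly less than $1/(2n^2)$. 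Note that the union bound uses only the marginal law of one class, which by symmetry is a uniformly random subset of $B$ of its prescribed size; hence I need no independence between the classes.

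The technical heart is the estimate on a single bad event. Fixing a neighbourhood $N$ with $|N|=d\ge\delta_A$ and a class $B_j$ that is a uniform $\ell$-subset of $B$ with $\ell\ge\lfloor m/n\rfloor$, the probability that $B_j$ avoids $N$ is the hypergeometric quantity $\binom{m-d}{\ell}\big/\binom{m}{\ell}=\prod_{k=0}^{\ell-1}\frac{m-d-k}{m-k}$ (when $d+\ell>m$ the two sets must meet and this probability is simply $0$). Each factor equals $1-\tfrac{d}{m-k}\le 1-\tfrac{d}{m}\le 1-\tfrac{\delta_A}{m}$, so the product is at most $\big(1-\tfrac{\delta_A}{m}\big)^{\ell}\le\big(\tfrac{m-\delta_A}{m}\big)^{\lfloor m/n\rfloor}$. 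Plugging this into the union bound gives $\Pr[\text{some }a_i\text{ fails}]\le 2n(n-1)\big(\tfrac{m-\delta_A}{m}\big)^{\lfloor m/n\rfloor}< 2n^2\big(\tfrac{m-\delta_A}{m}\big)^{\lfloor m/n\rfloor}$, which is exactly $2n^2$ divided by the right-hand side of the hypothesis and hence strictly less than $1$. Therefore a colouring avoiding every bad event exists; combined with the automatic acyclicity it is a $b$-colouring of $D$ with $n$ colours in which every vertex of $A$ is a $b$-vertex, so $\text{dib}(D)\ge n$. I expect the main obstacle to be purely in the bookkeeping: presenting the monotonicity of the hypergeometric factors cleanly and correctly handling the boundary case $d+\ell>m$, rather than any conceptual difficulty.
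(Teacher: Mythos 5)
Your proof is correct, and at its core it is the paper's own argument recast in probabilistic rather than enumerative language: the paper also colors $A$ bijectively, also works with equitable partitions of $B$ into parts of size $\lfloor m/n\rfloor$ or $\lceil m/n\rceil$, and also shows that partitions in which some part misses some $N^{+}(x)$ or $N^{-}(x)$ cannot exhaust all of them; your union bound plus hypergeometric estimate corresponds exactly to its count of bad partitions together with its inequality $\binom{m}{\ell}\big/\binom{m-\delta_A}{\ell}\ge\bigl(\tfrac{m}{m-\delta_A}\bigr)^{\ell}$, which is the reciprocal of your bound $\binom{m-\delta_A}{\ell}\big/\binom{m}{\ell}\le\bigl(\tfrac{m-\delta_A}{m}\bigr)^{\ell}$. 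What your packaging buys is a genuine streamlining of the bookkeeping: you never need the paper's recurrence $n|P^*_n(m)| = \binom{m}{\lfloor m/n\rfloor}|P^*_{n-1}(m-\lfloor m/n\rfloor)|+\binom{m}{\lceil m/n\rceil}|P^*_{n-1}(m-\lceil m/n\rceil)|$, nor its separate treatment of the cases $m\equiv 0$ and $m\not\equiv 0\pmod n$, because the marginal law of a single class already carries that information; you also make explicit the acyclicity check (each class meets $A$ in one vertex, and a directed cycle in a digon-free bipartite digraph needs at least two vertices of $A$), which the paper dismisses as ``not hard to see.'' One small point to tighten: in a uniformly random equitable partition the size of a fixed labeled class $B_j$ is itself random (it equals $\lceil m/n\rceil$ with probability $(m\bmod n)/n$), so ``its prescribed size'' is not quite accurate; either fix the class sizes in advance before randomizing, or condition on $|B_j|=\ell$ and observe that your bound $\bigl(1-\tfrac{\delta_A}{m}\bigr)^{\ell}\le\bigl(1-\tfrac{\delta_A}{m}\bigr)^{\lfloor m/n\rfloor}$ holds for both possible values of $\ell$, so the mixture obeys the same bound. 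With that one-sentence fix the argument is complete.
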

\begin{proof}
Let $D =(A, B)$ be a simple bipartite  digraph, with $|A| = n\leq m = |B|$, and assume that $B=[m]$. Let $P^*_n(m)$ be the set of $n$-partitions of $[m]$ such that each part has cardinality either $\lfloor\frac{m}{n}\rfloor$ or $\lceil\frac{m}{n}\rceil$ and let  $re(m,n)$ be the residue of $m$ modulo $n$. Observe that for every $n$-partition in $P^*_n(m)$, the number of parts of order $\lfloor\frac{m}{n}\rfloor$ is $n-re(m,n)$.  First we will assume that $re(n,m) \not=0$. 

It follows that ${{m}\choose {\lceil\frac{m}{n}\rceil}}|P^*_{n-1}(m-\lceil\frac{m}{n}\rceil)| = re(m,n) |P^*_n(m)|$ and that ${{m}\choose {\lfloor\frac{m}{n}\rfloor}}|P^*_{n-1}(m-\lfloor\frac{m}{n}\rfloor)| = (n-re(m,n)) |P^*_n(m)|$.  Thus 
\begin{equation}\label{nP} n|P^*_n(m)| = {{m}\choose {\lfloor\frac{m}{n}\rfloor}}|P^*_{n-1}(m-\lfloor\frac{m}{n}\rfloor)|+ {{m}\choose {\lceil\frac{m}{n}\rceil}}|P^*_{n-1}(m-\lceil\frac{m}{n}\rceil)|.\end{equation}

On the one hand, observe that $\frac{{{m}\choose{\lfloor\frac{m}{n}\rfloor}}}{{{m-\delta_A}\choose{\lfloor\frac{m}{n}\rfloor}}}=  \frac{m!(m-\delta_A-\lfloor\frac{m}{n}\rfloor)!}{(m-\lfloor\frac{m}{n}\rfloor)!(m-\delta_A)!}\geq (\frac{m}{m-\delta_A})^{\lfloor\frac{m}{n}\rfloor}$ and, similarly, $\frac{{{m}\choose{\lceil\frac{m}{n}\rceil}}}{{{m-\delta_A}\choose{\lceil\frac{m}{n}\rceil}}} \geq (\frac{m}{m-\delta_A})^{\lceil\frac{m}{n}\rceil}$. If $2n^2 <  (\frac{m}{m-\delta_A})^{\lfloor\frac{m}{n}\rfloor} $ then also $2n^2 <(\frac{m}{m-\delta_A})^{\lceil\frac{m}{n}\rceil}$ and therefore $2n^2< min\{\frac{{{m}\choose{\lfloor\frac{m}{n}\rfloor}}}{{{m-\delta_A}\choose{\lfloor\frac{m}{n}\rfloor}}}, \frac{{{m}\choose{\lceil\frac{m}{n}\rceil}}}{{{m-\delta_A}\choose{\lceil\frac{m}{n}\rceil}}}\}$.  Thus, by (\ref{nP}) we see that \begin{equation}\label{1} n|P^*_n(m)| >  2n^2{{m-\delta_A}\choose{\lfloor\frac{m}{n}\rfloor}}|P^*_{n-1}(m-\lfloor\frac{m}{n}\rfloor)|+ 2n^2{{m-\delta_A}\choose{\lceil\frac{m}{n}\rceil}}|P^*_{n-1}(m-\lceil\frac{m}{n}\rceil)|.\end{equation}

On the other hand, for every $x\in A$,  the number of $n$-partitions  $\{Y_1, \dots, Y_n\}$  of $P^*_n(m)$ such that there is a part $Y_j$ of order $\lfloor\frac{m}{n}\rfloor$ such that either $N_D^+(x)\cap Y_j=\emptyset$ or $N_D^-(x)\cap Y_j=\emptyset$ is at most ${{|B\setminus N^+_D(x)|}\choose{\lfloor\frac{m}{n}\rfloor}}|P^*_{n-1}(m-\lfloor\frac{m}{n}\rfloor)| + {{|B\setminus N^-_D(x)|}\choose{\lfloor\frac{m}{n}\rfloor}}|P^*_{n-1}(m-\lfloor\frac{m}{n}\rfloor)|\leq 2{{m-\delta_A}\choose{\lfloor\frac{m}{n}\rfloor}}|P^*_{n-1}(m-\lfloor\frac{m}{n}\rfloor)|.$ In a similar way we see that the number of $n$-partitions  $\{Y_1, \dots, Y_n\}$  of $P^*_n(m)$ such that there is a part $Y_j$ of order $\lceil\frac{m}{n}\rceil$ such that either $N_D^+(x)\cap Y_j=\emptyset$ or $N_D^-(x)\cap Y_j=\emptyset$ is at most $2{{m-\delta_A}\choose{\lceil\frac{m}{n}\rceil}}|P^*_{n-1}(m-\lceil\frac{m}{n}\rceil)|$. Hence, for each vertex $x\in A$, the number of elements $\{Y_1, \dots, Y_n\}$  of $P^*_n(m)$ such that there is a part $Y_j$  such that either $N_D^+(x)\cap Y_j=\emptyset$ or $N_D^-(x)\cap Y_j=\emptyset$ is at most $2\Big({{m-\delta_A}\choose{\lfloor\frac{m}{n}\rfloor}}|P^*_{n-1}(m-\lfloor\frac{m}{n}\rfloor)|+ {{m-\delta_A}\choose{\lceil\frac{m}{n}\rceil}}|P^*_{n-1}(m-\lceil\frac{m}{n}\rceil)\Big)$. 

By (\ref{1}) we see that $|P^*_n(m)| > 2n\Big({{m-\delta_A}\choose{\lceil\frac{m}{n}\rceil}}|P^*_{n-1}(m-\lceil\frac{m}{n}\rceil)| + {{m-\delta_A}\choose{\lfloor\frac{m}{n}\rfloor}}|P^*_{n-1}(m-\lfloor\frac{m}{n}\rfloor)\Big)$ which implies there is a partition $\{Y_1, \dots, Y_n\}$  in $P^*_n(m)$ such that for every $x\in A$ we have that for every $j\in [n]$, $N_D^+(x)\cap Y_j\not=\emptyset$ and $N_D^-(x)\cap Y_j\not=\emptyset$. 

Its is not hard to see that the coloring $\Gamma: V(D)\rightarrow [n]$ where each vertex in $A$ receives different color; and for each vertex $y\in B$, $y$ receives color $i\in [n]$ if and only if $y\in Y_i$, is a $b$-coloring of $D$ where every vertex in $A$ is a $b$-vertex. 

The proof for the case when $re(m,n)=0$ is similar, and the result follows. \end{proof}

\begin{corollary}
     Let $D$ be a simple bipartite  digraph, with $|A| = n\leq m = |B|$. Let $p$ be such that $\lfloor\frac{m}{n}\rfloor=p(1+2log_2(n))$. If $\delta_A > m\Big(1-\frac{1}{2^\frac{1}{p}}\Big)$  then $dib(D)\geq n$.    
\end{corollary}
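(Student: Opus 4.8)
The plan is to deduce this directly from Theorem \ref{partitions}: I will check that the stated lower bound on $\delta_A$ forces the inequality $2n^2 < \bigl(\tfrac{m}{m-\delta_A}\bigr)^{\lfloor m/n\rfloor}$ appearing in the hypothesis of that theorem. Everything reduces to a short manipulation of the assumed bound on $\delta_A$, after which Theorem \ref{partitions} supplies the conclusion $dib(D)\geq n$ (together with the $b$-coloring in which every vertex of $A$ is a $b$-vertex).

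First I would rewrite the hypothesis $\delta_A > m\bigl(1 - 2^{-1/p}\bigr)$ in multiplicative form. Subtracting from $m$ gives $m - \delta_A < m\,2^{-1/p}$. Since $D$ is simple (no digons), the out- and in-neighborhoods of any vertex of $A$ are disjoint subsets of $B$, so $d^+_D(x)+d^-_D(x)\le m$ and hence $\delta_A\le m/2$; in particular $m-\delta_A>0$ and we may divide to obtain
\[
\frac{m}{m-\delta_A} > 2^{1/p}.
\]
Next I would raise both sides to the power $\lfloor m/n\rfloor$, a positive exponent, so the inequality is preserved, and invoke the defining relation $\lfloor m/n\rfloor = p\bigl(1 + 2\log_2 n\bigr)$, which makes the exponent on the right collapse:
\[
\Bigl(\tfrac{m}{m-\delta_A}\Bigr)^{\lfloor m/n\rfloor} > 2^{\lfloor m/n\rfloor / p} = 2^{1 + 2\log_2 n} = 2\,n^2.
\]
This is precisely the inequality required by Theorem \ref{partitions}, so that theorem applies.

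I do not anticipate a genuine obstacle here, as the argument is essentially a one-line substitution. The only points that warrant care are verifying $m-\delta_A>0$ so that the quotient is well defined, which follows from $\delta_A\le m/2$ as noted above, and confirming that the cleaner hypothesis on $\delta_A$ is genuinely stronger than the hypothesis of Theorem \ref{partitions} rather than merely comparable to it; the displayed chain of inequalities settles exactly this implication.
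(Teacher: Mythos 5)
Your proposal is correct and follows essentially the same route as the paper: both proofs convert $\delta_A > m\bigl(1-2^{-1/p}\bigr)$ into $\frac{m}{m-\delta_A} > 2^{1/p}$, raise to the power $\lfloor m/n\rfloor = p(1+2\log_2 n)$ to obtain $\bigl(\frac{m}{m-\delta_A}\bigr)^{\lfloor m/n\rfloor} > 2^{1+2\log_2 n} = 2n^2$, and then invoke Theorem \ref{partitions}. Your extra check that $m-\delta_A>0$ (via disjointness of out- and in-neighborhoods in a simple digraph) is a small point of care the paper leaves implicit, but it does not change the argument.
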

\begin{proof}  If $\delta_A > m\Big(1-\frac{1}{2^\frac{1}{p}}\Big)$ then $\Big( \frac{m}{m-\delta_A}\Big)^{\lfloor\frac{m}{n}\rfloor} > \Big( \frac{m}{m-m\Big(1-\frac{1}{2^\frac{1}{p}}\Big)}\Big)^{\lfloor\frac{m}{n}\rfloor} = \Big( 2^\frac{1}{p}\Big)^{\lfloor\frac{m}{n}\rfloor} = 2^\frac{\lfloor\frac{m}{n}\rfloor}{p}= 2^{1+2log_2(n)} = 2n^2$ and from Theorem \ref{partitions} the result follows.   
\end{proof}

\begin{theorem} Let $D =(A, B)$ be a simple 
digraph, with $|A| = n\leq m= |B|$.  If $\delta_A\geq 2n(n-1)$, then $\text{dib}(D)\geq n$. Moreover, if $D$ is an orientation of  $K_{n,m}$, with $\delta_A\geq n^2$, then $\text{dib}(D)\geq n$.
\end{theorem}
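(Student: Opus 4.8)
The plan is to exhibit an explicit $b$-coloring with $n$ colors in which \emph{every} vertex of $A$ is a $b$-vertex, mirroring the conclusion of Theorem \ref{partitions}. Assign to the $n$ vertices of $A$ the $n$ colors of $[n]$ bijectively, and write $\mathrm{col}(x)$ for the color of $x\in A$. Since $A$ is independent and every directed cycle of a bipartite digraph meets $A$ in at least two vertices (a simple digraph has no digons, so its cycles have length at least $4$), any coloring in which $A$ receives pairwise distinct colors is automatically acyclic, no matter how $B$ is colored. Hence the entire difficulty is to color $B$ so that for each $x\in A$ and each $j\in[n]\setminus\{\mathrm{col}(x)\}$ there is an out-neighbour of $x$ colored $j$ (making $x$ a $b^+$-vertex) and an in-neighbour of $x$ colored $j$ (making $x$ a $b^-$-vertex). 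As each color $i$ is then carried by the vertex $x_i\in A$, which is a $b$-vertex, this yields a $b$-coloring with $n$ colors and hence $\text{dib}(D)\ge n$.

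I would recast the coloring of $B$ as a system of distinct representatives. For each $x\in A$ and each color $j\neq\mathrm{col}(x)$ introduce two demands: an \emph{out-demand} with admissible set $N^+_D(x)$ and an \emph{in-demand} with admissible set $N^-_D(x)$, each to be met by a vertex of $B$ that will then receive color $j$. There are exactly $2n(n-1)$ demands, all of whose admissible sets lie in $B$ and have size at least $\delta_A$. If the bipartite demand--vertex incidence admits a matching saturating all demands, I color each matched vertex of $B$ with the color of its demand and color the remaining vertices of $B$ arbitrarily; distinctness of the representatives makes the coloring well defined, and by construction every $x\in A$ sees all colors $\neq\mathrm{col}(x)$ among both its out- and its in-neighbours, so $x$ is a $b$-vertex.

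The matching I would produce via Hall's theorem. For the first assertion this is immediate: each admissible set has size at least $\delta_A\ge 2n(n-1)$, which is at least the total number $2n(n-1)$ of demands, so the union of any subfamily $S$ has size at least $\delta_A\ge|S|$. The substantive work is the moreover part, where $\delta_A\ge n^2$ is strictly weaker (for $n\ge 3$). Here I would exploit that in an orientation of $K_{n,m}$ one has $N^+_D(x)\cup N^-_D(x)=B$ (a disjoint union) and $m=d^+_D(x)+d^-_D(x)\ge 2\delta_A$. Given a subfamily $S$, let $X^+$ and $X^-$ be the vertices of $A$ carrying, respectively, an out- and an in-demand of $S$, so that $|S|\le(n-1)(|X^+|+|X^-|)$. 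If $X^+\cap X^-\neq\emptyset$, then for some $x_0$ the union of admissible sets contains $N^+_D(x_0)\cup N^-_D(x_0)=B$, hence has size $m\ge 2n^2>2n(n-1)\ge|S|$; otherwise $|X^+|+|X^-|\le n$, so $|S|\le n(n-1)$ while the union contains a single admissible set of size at least $\delta_A\ge n^2>|S|$. In both cases Hall's condition holds.

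The step I expect to be most delicate is this last case analysis: one must split on whether $X^+\cap X^-$ is empty so that the complete-bipartite identity $N^+_D(x)\cup N^-_D(x)=B$ is available exactly when the subfamily is too large for the crude single-set bound, all while tracking the factor $(n-1)$ coming from the number of target colors per vertex of $A$. The remaining points, both routine, are to confirm that the representatives are genuinely forced to be distinct (so the coloring of $B$ is consistent) and that all $n$ colors are actually used, the latter holding because each color already appears on a vertex of $A$.
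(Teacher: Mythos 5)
Your proof is correct, but it takes a genuinely different route from the paper. The paper's argument is an explicit sequential construction: it colors $x_1,\dots,x_n$ one at a time and, at each step $r+1$, greedily assigns color $c_{r+1}$ to fresh vertices in each $N^{\pm}(x_i)$, $i\le r$, and colors $c_1,\dots,c_r$ to fresh vertices in $N^{\pm}(x_{r+1})$, maintaining the invariant that at most $2r(r-1)$ vertices of $B$ (respectively $r^2$ in the $K_{n,m}$ case, where the identity $N^+(x)\cup N^-(x)=B$ lets one reuse already-colored vertices) have been colored after $r$ steps; the degree hypotheses guarantee fresh vertices always exist. You instead recast the whole problem as a system of distinct representatives for the $2n(n-1)$ demands $(x,j,\pm)$ and verify Hall's condition: trivially when $\delta_A\ge 2n(n-1)$, and via the clean dichotomy ($X^+\cap X^-\neq\emptyset$ gives union $B$ of size $m\ge 2\delta_A\ge 2n^2>2n(n-1)$; disjointness gives $|S|\le n(n-1)<n^2\le\delta_A$) in the orientation case. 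Your version buys transparency -- the bound $2n(n-1)$ is literally the number of demands, and the role of the partition $N^+(x)\sqcup N^-(x)=B$ is isolated in one case of Hall's condition -- and it avoids the paper's bookkeeping of "at most $4r$ (resp.\ $2r+1$) new vertices per step." It also makes acyclicity explicit (every directed cycle in a digon-free bipartite digraph meets $A$ twice, and $A$ is rainbow), a point the paper leaves implicit. What the paper's approach buys in exchange is an explicit linear-time greedy algorithm rather than an existence statement routed through a matching theorem, and its incremental structure is what generalizes to the theorem that follows it. Both arguments establish the stronger conclusion that every vertex of $A$ is a $b$-vertex.
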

\begin{proof} Let  $A=\{x_1, \dots, x_n\}$. First, assume that  $D$ is an orientation of $K_{n,m}$. Colored $x_1$ with color $c_1$ and $x_2$ with color $c_2$. Choose a vertex in $N^+(x_1)$ and another in $N^-(x_1)$  and assign them color $c_2$. Similarly, choose a vertex in $N^+(x_2)$ and another in $N^-(x_2)$  and assign them color $c_1$. 
Colored $x_3$ with color $c_3$. Since $\{N^+(x_3),N^-(x_3)\}$ is a partition of $B$, in $N^+(x_3) \cup N^-(x_3)$ there are already vertices with color $c_1$ and color $c_2$. Thus, we only need to choose at most one new vertex to be of color $c_1$ and another new vertex to be of color $c_2$ such that in $N^+(x_3)$ there are vertices of color $c_1$ and $c_2$ as well as in $N^-(x_3)$. Now, choose a new vertex in $N^+(x_1)$ and another in $N^-(x_1)$  and assign them color $c_3$. In this way, in $N^+(x_1)$ there are vertices of color $c_2$ and $c_3$ as well as in $N^-(x_1)$. Since $\{N^+(x_2),N^-(x_2)\}$ is a partition of $B$, either in $N^+(x_2)$ or in $N^-(x_2)$  there is already a vertex of color $c_3$. So, we only have to choose at most one new vertex to be of color $c_3$ so that in $N^+(x_2)$ there are vertices of color $c_1$ and $c_3$ as well as in $N^-(x_2)$. So far we have colored at most $9$ vertices in $B$. 
Following this procedure, assume that for $3\leq r<n$, for every $i\in [r]$ $x_i$ receives color $c_i$; for each $i\in[r]$ there are vertices of colors $\{c_1, \dots, c_r\}\setminus\{c_i\}$ in $N^+(x_i)$ and in $N^-(x_i)$; and that there are at most $ r^2$ colored vertices in $B$. Recall that $n^2 \leq \delta_A$, hence for each $x_i\in A$, with $i\in [r]$, in $N^+(x_i)$ (as well as in $N^-(x_i)$) there are at least $n^2-r^2$ vertices with no color. 

Let colored $x_{r+1}$ with color $c_{r+1}$. In  $N^+(x_{r+1})\cup N^-(x_{r+1})$ there are already vertices of color $c_1,\dots, c_{r}$. Thus, for each color $c_i$, with $i\in[r]$, we only have to choose at most  one new vertex in $B$ and  assign color $c_i$ to it, so that there are vertices of colors $\{c_1, \dots, c_r\}$ in $N^+(x_{r+1})$ and in $N^-(x_{r+1})$. Now choose any new vertex $y$ in $B$ and color it with color $c_{r+1}$. For every $i\in [r]$, $y\in N^+(x_i)$ or $y\in N^-(x_i)$. Therefore, for each color $c_i$, with $i\in[r]$, we only have to choose at most  one new vertex in $B$ and  assign to it color $c_{r+1}$, so that
for each $i\in[r]$ there are vertices of color $c_{r+1}$ in $N^+(x_i)$ and in $N^-(x_i)$. Observe that in this step we colored at most $2r+1$ new vertices in $B$. 
Thus, so far there are at most $(r+1)^2$ colored vertices in $B$. 

 Since $\delta_A\geq n^2$, we can proceed in this way until coloring the vertex $x_n$ with color $c_n$ and coloring some vertices in $B$ in a way that, for each $i\in [n]$, $x_i$ has color $c_i$, and such that there are vertices of colors $\{c_1, \dots, c_n\}\setminus\{c_i\}$ in $N^+(x_i)$ and in $N^-(x_i)$. 
If there are more vertices in $B$, let assign them color $c_1$. This coloring of $D$ is a $b$-coloring with $n$ colors, and the result follows. 

For the general case, colored $x_1$ with color $c_1$ and $x_2$ with color $c_2$. Choose a vertex in $N^+(x_1)$ and other in $N^-(x_1)$  and assign them color $c_2$. Similarly, choose a vertex in $N^+(x_2)$ and other in $N^-(x_2)$  and assign them color $c_1$. So far we have colored at most $4$ vertices in $B$.
Assume that for some $2\leq r<n$, for every $i\in [r]$ $x_i$ receives color $c_i$; for each $i\in[r]$ there are vertices of colors $\{c_1, \dots, c_r\}\setminus\{c_i\}$ in $N^+(x_i)$ and in $N^-(x_i)$; and that there at most $2r(r-1)$ colored vertices in $B$. Since $\delta_A \geq 2n(n-1)$, for each $x_i\in A$, with $i\in [r]$, in $N^+(x_i)$ (as well as in $N^-(x_i)$) there are at least $2n(n-1)-2r(r-1)$ vertices with no color. Let colored $x_{r+1}$ with color $c_{r+1}$, and for each $i\in[r]$ choose a vertex in $N^+(x_{r+1})$ and another in $N^-(x_{r+1})$ and assign them color $c_i$. Also, for each $i\in [r]$, choose a vertex in $N^+(x_{i})$ and another in $N^-(x_{i})$ and assign them color $c_{r+1}$.
Observe that in this step we colored at most $4r$ vertices in $B$, so there are at most $2(r+1)r$ colored vertices in $B$. From here, as in the case of the orientation of $K_{n,m}$, the result follows. \end{proof}


For the next theorem, given  a bipartite  digraph $D =(A, B)$, and a pair of vertices  $\{y_1, y_2\}\subseteq B$,  let $c(y_1, y_2)= |N_D^+(y_1)\cap N_D^-(y_2)|$.

\begin{theorem}
 Let $D =(A, B)$ be a simple bipartite  digraph, with $|A| = n\leq m = |B|$. If for a pair $\{y_1, y_2\}\subseteq B$, $$2 c(y_1, y_2)^2 < (\frac{m-2}{m-2-(\delta_A-1)})^{\lfloor\frac{m-2}{c(y_1, y_2)}\rfloor} $$
then  $\text{dib}(D)\geq c(y_1, y_2)+1$.
\end{theorem}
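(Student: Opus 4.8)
The plan is to imitate the construction of Theorem \ref{partitions}, using the pair $\{y_1,y_2\}$ to furnish one extra color class. Write $c=c(y_1,y_2)$ and $S=N_D^+(y_1)\cap N_D^-(y_2)\subseteq A$, so $|S|=c$. I would color the $c$ vertices of $S$ with distinct colors $c_1,\dots,c_c$, assign the color $c_{c+1}$ to both $y_1$ and $y_2$ and to every vertex of $A\setminus S$, and color the remaining $m-2$ vertices of $B\setminus\{y_1,y_2\}$ according to a carefully chosen $c$-partition. The point of this assignment is that $\{y_1,y_2\}$ is immediately a $b$-pair for $c_{c+1}$: since $S\subseteq N_D^+(y_1)$ and the vertices of $S$ carry all of $c_1,\dots,c_c$, the vertex $y_1$ is a $b^+$-vertex, and dually $S\subseteq N_D^-(y_2)$ makes $y_2$ a $b^-$-vertex. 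Moreover each $x\in S$ already sees color $c_{c+1}$ in both directions through the arcs $y_1x$ and $xy_2$.

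The heart of the argument is to choose the $c$-partition of $B\setminus\{y_1,y_2\}$ so that every $x\in S$ becomes a full $b$-vertex; equivalently, so that for every $x\in S$ and every $j\in[c]$ both $N_D^+(x)$ and $N_D^-(x)$ meet the $j$-th part. The key observation reducing this to Theorem \ref{partitions} is a degree count: for each $x\in S$ the vertex $y_2$ is an out-neighbor and $y_1$ is an in-neighbor, and since $D$ is simple these are the \emph{only} members of $\{y_1,y_2\}$ in $N_D^+(x)$ and $N_D^-(x)$ respectively (the alternative would create a digon). Hence $x$ retains at least $\delta_A-1$ out-neighbors and at least $\delta_A-1$ in-neighbors inside $B\setminus\{y_1,y_2\}$. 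I would therefore apply the counting argument of Theorem \ref{partitions} to the bipartite digraph induced by $S\cup(B\setminus\{y_1,y_2\})$, with the roles of $n,m,\delta_A$ played by $c,\,m-2,\,\delta_A-1$; the hypothesis $2c^2<\big(\frac{m-2}{m-2-(\delta_A-1)}\big)^{\lfloor(m-2)/c\rfloor}$ is exactly what that argument requires, and since its right-hand side must exceed $1$ it also forces $c\leq m-2$, so the balanced partitions exist. This yields a balanced $c$-partition $\{Y_1,\dots,Y_c\}$ of $B\setminus\{y_1,y_2\}$ meeting every out- and in-neighborhood of every $x\in S$, and coloring $y\in Y_j$ with $c_j$ makes each $x\in S$ a $b$-vertex (it sees $c_1,\dots,c_c$ through the parts and $c_{c+1}$ through $y_1,y_2$).

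It remains to verify acyclicity. Each class $c_i$ with $i\leq c$ contains the single $A$-vertex $x_i$ together with vertices of $B$, so it cannot host a directed cycle, since such a cycle would alternate between $A$ and $B$ and hence need two vertices of $A$. For the class $c_{c+1}$, whose $A$-part is $A\setminus S$ and whose $B$-part is exactly $\{y_1,y_2\}$, any directed cycle would be a $4$-cycle through $y_1$ and $y_2$; but the $A$-vertex $a$ on such a cycle receiving from $y_1$ and sending to $y_2$ would satisfy $a\in N_D^+(y_1)\cap N_D^-(y_2)=S$, contradicting $a\in A\setminus S$. Thus the coloring is acyclic, and being a $b$-coloring with $c+1$ colors it gives $\text{dib}(D)\geq c(y_1,y_2)+1$.

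I expect the main obstacle to be checking that the counting argument of Theorem \ref{partitions} transfers verbatim under the substitution $(n,m,\delta_A)\mapsto(c,m-2,\delta_A-1)$ — in particular that the degree loss from deleting $y_1,y_2$ is exactly one on each side, which is what legitimizes the base $\delta_A-1$ in the exponent and is where simplicity of $D$ is used. The acyclicity of the new class $c_{c+1}$ is the only genuinely fresh verification, and it rests precisely on the defining property of $S$.
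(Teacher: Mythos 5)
Your proposal is correct and takes essentially the same route as the paper's proof: both apply the counting result of Theorem~\ref{partitions} to the subdigraph induced by $N_D^+(y_1)\cap N_D^-(y_2)$ together with $B\setminus\{y_1,y_2\}$ (with parameters $c$, $m-2$, $\delta_A-1$), extend the resulting coloring by assigning the extra color to $y_1$, $y_2$, and the rest of $A$, and rule out a directed cycle in that last class by noting it would force an $A$-vertex into $N_D^+(y_1)\cap N_D^-(y_2)$. Your two explicit verifications --- that simplicity (no digons) is what makes the degree drop exactly one on each side, and that the hypothesis forces $c\leq m-2$ so the balanced partitions exist --- are correct refinements of points the paper leaves implicit.
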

\begin{proof} Let $D =(A, B)$ be a simple bipartite  digraph and $\{y_1, y_2\}\subseteq B$ be as in the statement. Let $C= N_D^+(y_1)\cap N_D^-(y_2)\subseteq A$ and let $D'$ the subdigraph of $D$ induced by $C\cup B\setminus \{y_1, y_2\}$. Observe that $|C|=c(y_1, y_2) $, $|B\setminus \{y_1, y_2\}|=m-2$ and $\delta_C\geq \delta_A-1$, thus, by Lemma \ref{partitions},  there is a $b$-coloring $\Gamma': V(D')\rightarrow [c(y_1, y_2)]$ where the  vertices in $C$ receive the colors $\{1, \dots, c(y_1, y_2)\}$ and all of them are $b$-vertices.

Let $\Gamma:V(D)\rightarrow [c(y_1, y_2)+1]$ defined as follows: $\Gamma(x) = \Gamma'(x)$ if $x\in V(D')$; and $\Gamma(x) = c(y_1, y_2)+1$ in other case. Since $C=N_D^+(y_1)\cap N_D^-(y_2)$, all the vertices in $C$ are $b$-vertices in $\Gamma$; and $y_1$ is a $b^+$-vertex and $y_2$ is a $b^-$-vertex. All the chromatic classes of color $i\in [c(y_1, y_2)]$ are stars, thus are acyclic. For the chromatic class of color $c(y_1, y_2)+1$, since the only vertices of color $c(y_1, y_2)+1$ in $B$ are $\{y_1, y_2\}$,  any directed cycle of that color  must be a square and contains the pair of vertices $\{y_1, y_2\}$, which is not possible since none of the vertices in $C= N_D^+(y_1)\cap N_D^-(y_2)$ receives color $c(y_1, y_2)+1$. \end{proof}


An orientation of the complete bipartite graph $(A,B)$, with $|A| = n$ and $|B|=m$, such that each arc goes from $A$ to $B$ is denoted by $\di{K_{n,m}}$. 

The following theorem shows that $\text{dib}(D)$ is an increasing function on $min\{n,m\}$, when $D$ is an orientation of the complete bipartite graph $K_{n,m}$.

\begin{theorem}\label{log} Let $D =(A, B)$ be an orientation of  $K_{n,m}$, with $|A| = n\leq m = |B|$, and let $r$ be an integer such that $n\geq r2^r(1+\frac{1}{2^r})^r+2r-1$. Then \[r\leq \text{dib}(D).\]
\end{theorem}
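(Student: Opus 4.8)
The plan is to prove the bound by induction on $r$, converting a $b$-coloring with $r$ colors into one with $r-1$ colors on a smaller orientation of a complete bipartite graph, exactly along the reduction used in the theorem that immediately precedes this one. Writing $f(r)$ for the right-hand side of the hypothesis, the base case (small $r$) is supplied by Theorem~\ref{partitions}. Throughout the induction I would carry the \emph{stronger} statement that the $(r-1)$-coloring can be chosen so that all $r-1$ colors already occur on the smaller side; this is exactly what the coloring produced by Theorem~\ref{partitions} satisfies, since there each vertex of the small side receives its own color.

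For the inductive step I would fix a pair $\{y_1,y_2\}\subseteq B$, set $C=N_D^{+}(y_1)\cap N_D^{-}(y_2)\subseteq A$, and let $D'$ be the sub-orientation of $K_{|C|,\,m-2}$ induced by $C\cup(B\setminus\{y_1,y_2\})$. Colouring $D'$ with $r-1$ colors by induction so that every colour appears on $C$, and then giving colour $r$ to $y_1,y_2$ and to all of $A\setminus C$, the relation $y_1\to x\to y_2$ valid for every $x\in C$ forces $y_1$ to be a $b^{+}$-vertex and $y_2$ a $b^{-}$-vertex of colour $r$. Since the only vertices of colour $r$ in $B$ are $y_1,y_2$, a monochromatic directed cycle would have to be a $4$-cycle through $\{y_1,y_2\}$, which would place one of its two $A$-vertices in $C$; as $C$ avoids colour $r$, no such cycle exists and colour $r$ is acyclic. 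Hence $\text{dib}(D)\ge\text{dib}(D')+1\ge r$ as soon as $|C|\ge f(r-1)$.

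Iterating, the two deleted vertices $y_1,y_2$ at each of the $r-1$ steps, together with a single vertex needed to launch the base case, account for the additive $2r-1$, while the multiplicative cost of passing from $A$ to $C$ at each level, which I expect to be essentially a factor $2$, produces the dominant term $r2^{r}(1+2^{-r})^{r}$ once the constants are optimized. The step I expect to be the real obstacle is guaranteeing at every level a pair whose transitive common neighbourhood $C$ retains a constant fraction of the current small side. Summing $|N_D^{+}(y_1)\cap N_D^{-}(y_2)|$ over ordered pairs $(y_1,y_2)$ gives $\sum_{x\in A}d^{+}(x)d^{-}(x)$, which yields an averaging bound over the $m(m-1)$ pairs; however this sum \emph{degenerates} for near one-directional orientations (for $\di{K_{n,m}}$ it equals $0$ and there is no directed $2$-path whatsoever), so the clean reduction on a subset of $A$ is simply unavailable there. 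That regime appears to force reserving colour $r$ through a $b^{+}$- and a $b^{-}$-vertex lying on opposite sides and recursing on their respective neighbourhoods, at the cost of a colour class that is no longer star-like, so that its acyclicity must be re-established separately. The uniform reconciliation of the transitive-rich and the near-acyclic regimes, while keeping the size loss to a factor $2$ per colour, is the main technical work; by Lemma~\ref{lema8} one must also verify throughout that the colours produced by the recursion really do land where the two chosen representatives can see them.
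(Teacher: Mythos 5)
There is a genuine gap, and it sits exactly where you yourself flag it. Your entire induction hinges on finding, at every level, a pair $\{y_1,y_2\}$ in the current large side whose set $C=N_D^{+}(y_1)\cap N_D^{-}(y_2)$ retains a constant fraction of the current small side. Such a set is nonempty only if there are directed paths $B\to A\to B$, i.e.\ only if many vertices of $A$ have both in-neighbors and out-neighbors in $B$. But the theorem is asserted for \emph{every} orientation of $K_{n,m}$, including $\overrightarrow{K_{n,m}}$, where all arcs go from $A$ to $B$: there $C=\emptyset$ for every pair, $\sum_{x\in A}d^{+}(x)d^{-}(x)=0$, and the recursion cannot even start. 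Your fallback for this regime (``reserving colour $r$ through a $b^{+}$- and a $b^{-}$-vertex lying on opposite sides and recursing on their respective neighbourhoods'') is not worked out: no coloring is specified, acyclicity of the non-star class is not re-established, and the claimed factor-$2$ loss per colour is never verified. Since the near one-directional regime is precisely what makes the theorem nontrivial (and is the regime the quantitative hypothesis $n\geq r2^{r}(1+\frac{1}{2^{r}})^{r}+2r-1$ is tailored to), deferring it means the proof is incomplete; the quantitative bookkeeping (additive $2r-1$, multiplicative $2$ per level) is likewise asserted rather than derived.

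For contrast, the paper handles this regime head-on and without any recursion: assuming without loss of generality that $\sum_{x\in A}d_D^{+}(x)\geq\sum_{x\in A}d_D^{-}(x)$, a K\H{o}v\'ari--S\'os--Tur\'an-type double count (vertices of $A$ against $r$-subsets $S\subseteq B$, joined when $S\subseteq N_D^{+}(x)$) shows that the hypothesis on $n$ forces a one-directional complete bipartite subdigraph $\overrightarrow{K_{r,r}}$ with parts $A'=\{x_1,\dots,x_r\}$ and $B'=\{y_1,\dots,y_r\}$. Coloring $x_i$ and $y_i$ with $c_i$, and sweeping $A\setminus A'$ into $c_1$ and $B\setminus B'$ into $c_2$, makes every $x_i$ a $b^{+}$-vertex and every $y_i$ a $b^{-}$-vertex, and every class is acyclic because all arcs between $A'$ and $B'$ point the same way. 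The key structural point you missed is that a $b$-coloring only needs a $b$-pair in each class, not a single $b$-vertex, so no directed $2$-path through $A$ is ever required --- the one-directional structure you could not handle is exactly the structure the paper's coloring exploits. If you wanted to salvage your induction, you would need a separate argument for orientations with few $B\to A\to B$ paths, and the paper's extraction of $\overrightarrow{K_{r,r}}$ essentially \emph{is} that argument, at which point the recursion becomes superfluous.
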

\begin{proof} Let $D =(A , B)$ be an orientation of  $K_{n,m}$, with $|A| = n\leq m = |B|$, and let  $r$ be the greater integer such that there is a subdigraph $D'=(A', B')$ of $D$ isomorphic to 
 $\di{K_{r,r}}$, with $A'= \{x_1, \dots, x_r\}$ and $B'= \{y_1, \dots, y_r\}$.  

Let  $\Gamma : V(D) \rightarrow \{c_1, \dots, c_r\}$ be a coloring  where: for each $i\in [r]$, $\Gamma(x_i) = \Gamma(y_i) = c_i$; for each $x\in A\setminus A'$, $\Gamma(x) = c_1$, and for each $x\in B\setminus B'$, $\Gamma(x) = c_2$. 

Each chromatic class is a star, thus $\Gamma$ is an acyclic coloring, and for each $i\in [r]$, $x_i$ is a $b^+$-vertex and $y_i$ is a $b^-$-vertex. Thus, $\Gamma$ is a $b$-coloring and $r\leq \text{dib}(D)$. 
Therefore, to end the proof we just have to show that if $n\geq r2^r(1+\frac{1}{2^r})^r+2r-1$ then there is a copy of  $\di{K_{r,r}}$ in $D$. Without loss of generality, let us suppose that $\sum\limits_{x\in A}d_D^+(x) \geq \sum\limits_{x\in A}d_D^-(x)$.  

Let $H=(X,Y)$ be the bipartite graph where $X=A$ and $Y$ is the set of $r$-subsets of vertices of $B$. Given $z\in X$ and $S\in Y$, $zS$ is an edge of $H$ if  $S\subseteq N^+_D(z)$. If there is no copy of $\di{K_{r,r}}$, it follows that for every $S\in Y$, $d_H(S)\leq r-1$ which implies that  $\sum\limits_{z\in X} {{d_D^+(z)}\choose {r}} \leq (r-1) {{m}\choose {r}}$. 

 Since $\sum\limits_{x\in A}d_D^+(x) \geq \sum\limits_{x\in A}d_D^-(x)$, it follows that 
 $\sum\limits_{x\in A}d_D^+(x)\geq \frac{nm}{2}$ and therefore  $\sum\limits_{z\in X} {{d_D^+(z)}\choose {r}} \geq n{{\lfloor\frac{m}{2}\rfloor}\choose {r}}$. Hence  $$(r-1){{m}\choose {r}}\geq  n{{\lfloor\frac{m}{2}\rfloor}\choose {r}},$$ which implies that $$\frac{{{m}\choose {r}}}{{{\lfloor\frac{m}{2}\rfloor}\choose {r}}}\geq \frac{n}{r-1}.$$ 
 On the one hand, $\frac{{{m}\choose {r}}}{{{\lfloor\frac{m}{2}\rfloor}\choose {r}}}\leq \big(\frac{m-r+1}{\lfloor\frac{m}{2}\rfloor -r+1}\big)^r= 2^r\big(\frac{m-r+1}{2\lfloor\frac{m}{2}\rfloor -2r+2}\big)^r\leq 2^r\big(\frac{m-r+1}{m -2r+1}\big)^r =  2^r\big(1+\frac{r}{m -2r+1}\big)^r$, and since $m\geq n \geq r2^r(1+\frac{1}{2^r})^r+2r-1$, it follows that $\frac{{{m}\choose {r}}}{{{\lfloor\frac{m}{2}\rfloor}\choose {r}}}\leq 2^r\big(1+ \frac{r}{r2^r(1+\frac{1}{2^r})^r}\big)^r= 2^r\big(1+ \frac{1}{2^r(1+\frac{1}{2^r})^r}\big)^r$. 
 
  On the other hand, since $n\geq r2^r(1+\frac{1}{2^r})^r+2r-1$, it follows that   $\frac{n}{r-1}> 2^r(1+\frac{1}{2^r})^r$.  Thus,  $2^r\big(1+ \frac{1}{2^r(1+\frac{1}{2^r})^r}\big)^r> 2^r(1+\frac{1}{2^r})^r$, which  is not possible, and the result follows. \end{proof}


\section{Proposed problems}

To conclude, we would like to present a couple of problems that, in our opinion, might be of interest. The first is related to Theorems \ref{teo9}, \ref{teo10}, and \ref{d=3}, and the second is related to tournaments.

\textbf{Problem 1}. Weaken the condition of $\delta_D \geq 2$ when considering bipartite digraphs with $dib(D)\geq 3$, for example, require that $\delta^+_D \geq 1$, $\delta^-_D \geq 1$ and $\delta^+_D + \delta^-_D \geq 3$. In this case, Figure \ref{Fig1} shows an example.

\begin{figure}[htbp!]
\begin{center}
\includegraphics{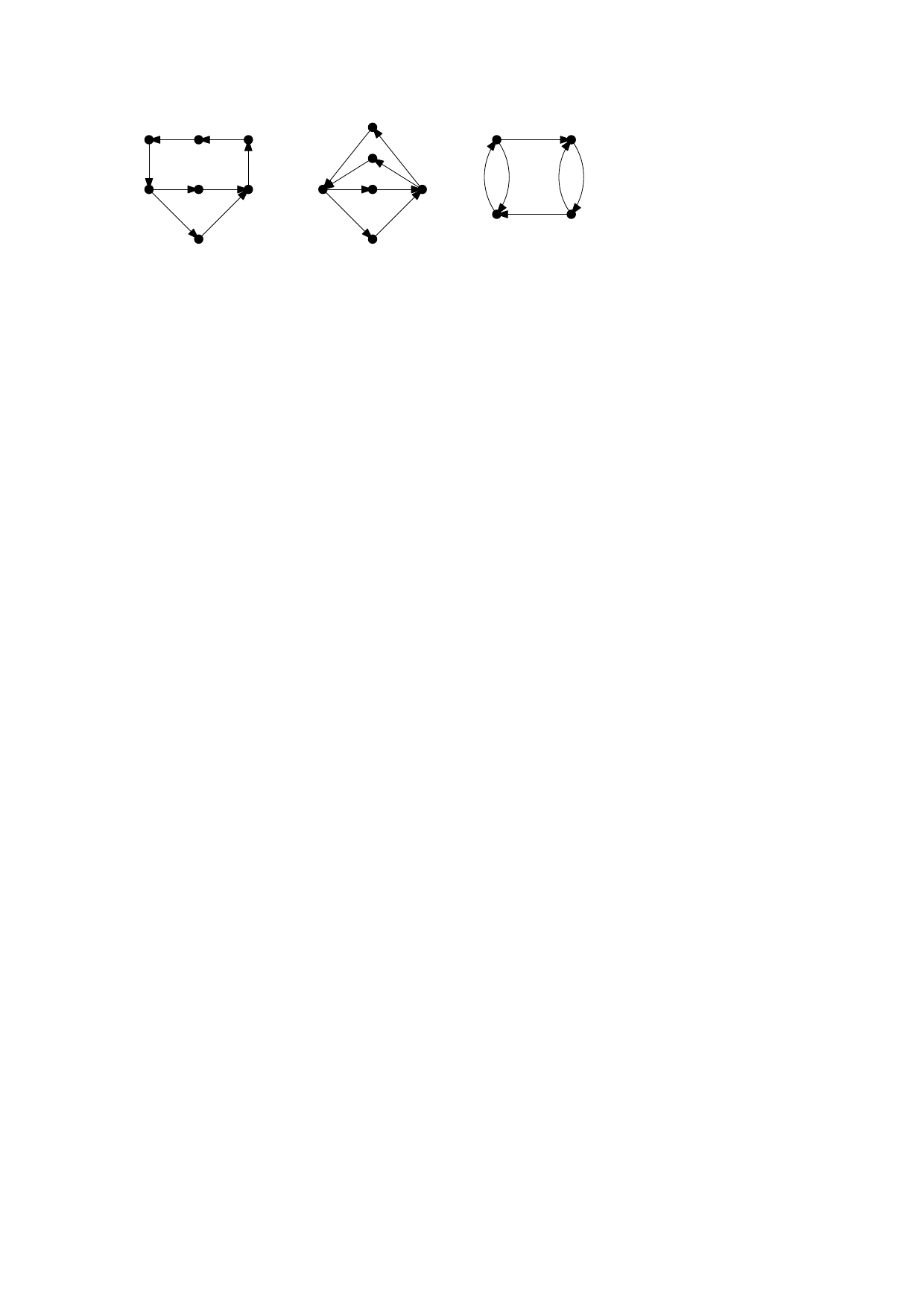}
\caption{\label{Fig1} There are bipartite graphs with 3 components whose $\text{dib}(D)$ is 2.}
\end{center}
\end{figure}

\textbf{Problem 2}. The behavior of the $b$-chromatic number is closely related to the Grundy number, for example \cite{MR4221644,MR4321298}. In the case of the digrundy number $\text{dG}$ \cite{MR4426060} and the dib-chromatic number, we believe that this behavior will hold. 
In particular, given a tournament $T_n$, it was shown in \cite{MR3875016} that the dichromatic number $\text{dac}(T_n)=\Omega(\frac{n}{\lg n})$, such bound is given using an order in the coloring, so the coloring turns out to be greedy, hence $\text{dG}(T_n)=\Omega(\frac{n}{\lg n})$. The question is also whether $\text{dib}(T_n)=\Omega(\frac{n}{\lg n})$.

\section{Statements and Declarations}

This work was partially supported by PAPIIT-M{\' e}xico under Projects IN111626 and IN113324.


All authors contributed to the conception and design of the study. All authors drafted the manuscript, read it, and approved the final manuscript.


\bibliographystyle{plain}
\bibliography{biblio}

\end{document}